\documentclass{amsart}

\usepackage{amsmath,amsthm,enumerate,verbatim,graphicx,wrapfig,longtable,mathrsfs}
\usepackage[all,arc]{xy}

\usepackage{hyperref}
\usepackage{tocvsec2}
\usepackage{amssymb}

\usepackage[T2A,T1]{fontenc}

\newcommand{\defof}[1]{\textbf{#1}}

\def\C{\ensuremath{\mathbb{C}}}
\def\N{\ensuremath{\mathbb{N}}}
\def\P{\ensuremath{\mathbb{P}}}
\def\R{\ensuremath{\mathbb{R}}}
\def\Z{\ensuremath{\mathbb{Z}}}

\def\SS{\mathfrak{S}}
\def\TT{\mathfrak{T}}
\def\AA{\mathfrak{A}}
\def\II{\mathfrak{I}}

\DeclareMathOperator\codim{codim}
\DeclareMathOperator\ec{ec}
\DeclareMathOperator\rk{rk}
\DeclareMathOperator\mspan{span}

\def\Mat{\ensuremath{\mathrm{Mat}}}

\hypersetup{
    colorlinks,%
    citecolor=black,%
    filecolor=black,%
    linkcolor=black,%
    urlcolor=black
}

\theoremstyle{plain}
\newtheorem{thm}{Theorem}[section]
\newtheorem{prop}[thm]{Proposition}
\newtheorem{lem}[thm]{Lemma}
\newtheorem{cor}[thm]{Corollary}

\theoremstyle{definition}
\newtheorem{cons}[thm]{Construction}
\newtheorem{defn}[thm]{Definition}
\newtheorem{defns}[thm]{Definitions}
\newtheorem{exmp}[thm]{Example}

\newtheorem{cex}[thm]{Counterexample}

\numberwithin{figure}{section}
\numberwithin{table}{section}

\title{The Expected Codimension of a Matroid Variety}
\author{Nicolas Ford}

\begin{document}

\maketitle

\section*{Abstract}
Matroid varieties are the closures in the Grassmannian of sets of points defined by specifying which Pl\"ucker coordinates vanish and which don't. In general these varieties are very ill-behaved, but in many cases one can estimate their codimension by keeping careful track of the conditions imposed by the vanishing of each Pl\"ucker coordinates on the columns of the matrix representing a point of the Grassmannian. This paper presents a way to make this procedure precise, producing a number for each matroid variety called its expected codimension that can be computed combinatorially solely from the list of Pl\"ucker coordinates that are prescribed to vanish. We prove that for a special, well-studied class of matroid varieties called positroid varieties, the expected codimension coincides with the actual codimension.

\tableofcontents

\settocdepth{section}

\section{Introduction}
Consider a point $x$ on the Grassmannian $G(k,n)$ of $k$-planes in $\C^n$. The \defof{matroid} of $x$ is defined to be the set of Pl\"ucker coordinates that are nonzero at $x$, and a \defof{matroid variety} is the closure of the set of points on $G(k,n)$ with a particular matroid. Many enumerative problems on the Grassmannian can be described in terms of matroid varieties; the Schubert varieties that form the usual basis for the cohomology ring of $G(k,n)$ are an especially well-behaved special case.

Unfortunately, matroid varieties can be very ugly in full generality. A good start toward understanding the behavior of a matroid variety would be to find some way to compute its dimension directly from the matroid that defines it, but even this has very little hope of succeeding.

Still, one can come up with an estimate of the codimension of a matroid variety inside its Grassmannian by keeping careful track of the conditions imposed by the vanishing of Pl\"ucker coordinates on the columns of the $k\times n$ matrix defining a point on $G(k,n)$. This paper is about a way to make this idea precise, producing a number called the \defof{expected codimension} for each matroid. While it doesn't always produce the actual codimension of the matroid variety, we will prove that it always does for \defof{positroids}, a particularly well-studied class of matroids which includes both Schuberts and Richardsons.

We write $[n]$ for the set $\{1,2,\ldots,n\}$, and for any set $S$ we write $\binom Sk$ for the set of all $k$-element subsets of $S$. $G(k,n)$ will always stand for the Grassmannian of $k$-planes in $\C^n$. For $S\in\binom{[n]}k$, we write $p_S$ for corresponding Pl\"ucker coordinate on $G(k,n)$; that is, thinking of elements of $G(k,n)$ as being represented by $k\times n$ matrices, $p_S$ is the determinant of the minor whose columns are the elements of $S$. All varieties in this paper are over $\C$.

\begin{figure}[t]
\includegraphics[width=6cm]{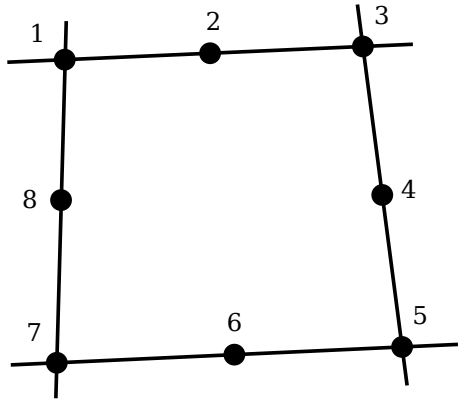}
\caption{A projective model of the ``square matroid.''}
\label{fig:squarepos}
\end{figure}

The procedure we follow was described quite informally in \cite[3.3]{fnrmat}, and we flesh it out here. We can draw a picture to represent a point in $G(k,n)$ by placing $n$ points in $\C^k$ or $\P^{k-1}$, each representing the corresponding column; we'll call these pictures \defof{projective models}. Consider the matroid $S$ of a point in $G(3,8)$ for which $p_{123}$, $p_{345}$, $p_{567}$ and $p_{178}$ are the only Pl\"ucker variables that vanish. A projective model for $S$ using points in $\P^2$ is shown in Figure \ref{fig:squarepos}.

We estimate the codimension of $X(S)$ in $G(3,8)$ as follows. To build a projective model of $S$ like the one in the figure, we are free to place the odd-numbered points wherever we want. Once we've done this, each even-numbered point is forced to live in the codimension-1 subspace spanned by two of the points we've already placed. So we guess that the codimension of $X(S)$ is $1+1+1+1=4$.

This turns out to be the correct answer for $\codim X(S)$, and we'll see later that the reasoning given is more or less why. One immediate question is whether the result of this procedure depends on the order in which we ``place'' the points. Once we've nailed down exactly what the procedure is, we will see that the answer to this question is no. For now, let's just try a couple more. If they're placed in order starting from the beginning, points 1, 2, 4, 5, and 7 can be put anywhere without restriction. As before, points 6 and 8 are now forced onto codimension-1 subspaces. But point 3 is now forced onto a codimension-2 subspace: it needs to be on the intersection of $\mspan\{1,2\}$ and $\mspan\{4,5\}$. So, adding all the restrictions up, we get $1+1+2=4$. Similarly, we could get ``$2+2$'' by placing the points 1, 2, 4, 5, 6, and 8 freely, and then putting 3 and 5 in last.

We will show that our definition is independent of the order by recasting it in terms of something manifestly order-independent. In $G(k,n)$, specifying exactly which Pl\"ucker coordinates vanish is the same as describing, for each subset of the set of columns, the dimension of its span in $\C^k$; in matroid language, this is called the \defof{rank} of the corresponding subset $[n]$. Our current procedure is to ask, for each element, what constraints are put on that element when it's added in. Instead let's ask, for each subset of the base set of the matroid, what constraints it puts on its elements. For example, in the set $\{1,2,3\}$ in $S$, the third element added in will be forced onto a codimension-1 subspace no matter what the order is; the only thing that matters is that the number of elements of this set is 1 more than its rank.

So it seems like we should add up the numbers $(\#F-\rk F)(k-\rk F)$ for each subset $F$; the first factor is the number of elements which are constrained by $F$ and the second is the codimension of the subspace those elements are constrained to. But this is not quite right: whenever an element belongs to two different such $F$'s, it's going to be counted twice. Sometimes this is desirable, as we saw with point 3 two paragraphs up, but often it will be redundant, as it is for the sets $\{1,2,3\}$ and $\{1,2,3,4\}$ in $S$. We ought to subtract 1 from the number of constrained elements for the larger set to account for the fact that it was already taken care of by the smaller one.

This, finally, takes us to the definition that we'll be using:

\begin{defn}
\label{def:expdim}
Let $M$ be a matroid of a point of $G(k,n)$, and let $\SS$ be a collection of subsets of $[n]$. For $S\in\SS$, we define \[c(S)=\#S-\rk S,\] and \[a_{\SS}(S)=c(S)-\sum_{\substack{T\in\SS\\T\subsetneq S}}a_{\SS}(T),\quad a_\SS(\varnothing)=0,\] where the sum goes over elements of $\SS$. (Note that this indeed recursively defines $a$ for all elements of $\SS$.) We then define the \defof{expected codimension of $M$ with respect to $\SS$} to be \[\ec_{\SS}(M):=\sum_{S\in\SS}(k-\rk S)a_{\SS}(S).\] The \defof{expected codimension of $M$} is then \[\ec(M):=\ec_{\mathcal{P}(E)}(M).\] (Similarly, we will write $a=a_{\mathcal{P}([n])}$.) We say that $M$ \defof{has expected codimension} if $\ec(M)$ is equal to the codimension of $X(M)$ in $G(k,n)$.
\end{defn}

Allowing $\SS$ to be something other than $\mathcal{P}([n])$ itself might seem strange, but it will turn out to be very helpful. We will show that in many cases $\ec_{\SS}$ will be the same for many different choices of $\SS$ but easier to compute for some choices than for others, and we will be happy to have the flexibility, for both theoretical and practical reasons.

In Definition \ref{def:positroid}, we describe an important class of matroids called \defof{positroids}. We will show in Theorem \ref{thm:posexpcodim} that positroids have expected codimension. In Section \ref{sec:valuativity} we also discuss \defof{valuativity}, a well-studied property of some numerical invariants of matroids, and show that expected codimension is valuative.

\subsection*{Acknowledgements}

I am incredibly grateful to my advisor, David Speyer, for many fruitful conversations and ideas, some of which appear in this paper, and to Allen Knutson for the same. I also want to thank Jordan Watkins both for talking to me about the content of this paper and for his meticulous proofreading. This work was partially supported by NSF grant DMS 0943832.

\section{Matroids and Matroid Varieties}
\subsection{Matroids}

We will need to have access to some theoretical results about abstract matroids. There are many equivalent definitions of matroids, all useful in different contexts, and we are only going to mention two of them here. A good place to learn more about matroids from a combinatorial perspective is \cite{whitemat}.

Given a collection of vectors in a vector space, its matroid combinatorially captures all the information about the linear relations among these vectors. We will consider two equivalent ways to do this. Details of these and other axiomatizations of matroids can be found in \cite[pp. 298--312]{whitemat}

\begin{defn}
\label{def:basismat}
A matroid may be specified in terms of its \defof{bases}. According to this definition, a matroid $M$ is a finite set $E$ together with a collection of subsets $\mathscr{B}\subseteq\mathcal{P}(E)$. (An element of $\mathscr{B}$ is called a basis.) We require:
\begin{itemize}
\item $\mathscr{B}$ is not empty.
\item No element of $\mathscr{B}$ contains another.
\item For $B,B'\in\mathscr{B}$ and $x\in B$, there is some $y\in B'$ so that $B-\{x\}\cup\{y\}\in\mathscr{B}$.
\end{itemize}
Note that this is enough to force all bases to have the same number of elements.
\end{defn}

\begin{defn}
\label{def:matroidvector}
Suppose we have a finite-dimensional vector space $V$, a finite set $E$, and a function $e:E\to V$ whose image spans $V$. We can put a matroid structure on $E$ by taking $\mathscr{B}$ to be the collection of all subsets of $E$ which map \emph{injectively} to a basis of $V$. (The reason for this funny definition is that we'd like to be able to take the same element of $V$ more than once; otherwise $E$ could just be a subset of $V$. We will hardly ever be careful about the difference between an element $x\in E$ and its image $e(x)\in V$.) It's an easy exercise to show that this definition satisfies the axioms above. Matroids which arise in this way are called \defof{realizable}.
\end{defn}

The following terminology will be useful. Most of these definitions mirror the corresponding ones from linear algebra in the realizable case.

\begin{defns}
\label{def:matroiddefs}
Let $M$ be a matroid on a set $E$.
\begin{enumerate}
\item A subset of $E$ which is contained in a basis is called \defof{independent}. Any other set is \defof{dependent}.
\item For $F\subseteq E$, the \defof{rank} of $F$, written $\rk F$, is the size of the largest independent set contained in $F$. Note that $\rk E$ is the same as the size of any basis. We define $\rk M$ to be $\rk E$.
\item For a set $F$ and an element $x\in E$, we say that $x$ is in the \defof{closure} of $F$, written $x\in\overline F$, if $\rk(F\cup\{x\})=\rk F$. Note that, as the name suggests, closure is idempotent and inclusion-preserving. Sets which are their own closures are called \defof{flats}. In the realizable case, the flats are the intersections of subspaces of $V$ with $E$.
\item A set $F$ which contains a basis is called a \defof{spanning set}. Equivalently, $F$ spans if $\rk F=\rk E$, or if $\overline F=E$.
\item If $\rk\{x\}=0$, we say $x$ is a \defof{loop}. Equivalently, $x$ is not in any basis, or $x\in\overline\varnothing$, or $x$ is in every flat. In the realizable case, loops are elements of $E$ which map to the zero vector in $V$.
\item If $\rk(E-\{x\})=\rk E-1$, we say $x$ is a \defof{coloop}. Equivalently, $x$ is in every basis.
\item If $\rk\{x,y\}=1$, we say that $x$ and $y$ are \defof{parallel}. Equivalently, any flat which contains one of $x$ or $y$ also contains the other.
\end{enumerate}
\end{defns}

It will also be convenient to note that matroids can be defined just by listing the axioms that have to be satisfied by the rank function defined above:

\begin{defn}
\label{def:rankmat}
A matroid may be specified in terms of the ranks of all its subsets. According to this definition, a matroid is a finite set $E$ together with a function $\rk:\mathcal{P}(E)\to\N$ satisfying:
\begin{itemize}
\item $\rk\varnothing=0$.
\item $\rk(F\cup\{x\})$ is either $\rk F$ or $\rk F+1$.
\item If $\rk F=\rk(F\cup\{x\})=\rk(F\cup\{y\})$, then $\rk(F\cup\{x,y\})=\rk F$.
\end{itemize}
Note that this is enough to force the useful inequality $\rk A+\rk B\ge\rk(A\cup B)+\rk(A\cap B)$.
\end{defn}

Given the same data we used to define a realizable matroid before --- a set $E$ with a function $e$ to a vector space $V$ --- we can get a rank function on $E$ by setting $\rk(F)=\dim(\mspan(e(F)))$.

We have already mentioned how to turn a collection of bases into a rank function. To go the other way, we can say $B$ is a basis if it is minimal among sets of maximal rank. One can check that these two correspondences make the two definitions given here equivalent. We will not distinguish between them as we go forward.

\subsection{Matroid Varieties}

As mentioned above, the main objects of study in this paper are certain subvarieties of Grassmannians which can be described in terms of matroids.

\begin{cons}
\label{cons:pointmat}
Consider the Grassmannian $G(k,n)$, which we'll think of as the set of $k\times n$ matrices of full rank modulo the obvious left action of $GL_k$. When one builds the Grassmannian in this way, one ordinarily considers the $k$ rows of the matrix as elements of $\C^n$, and the action of $GL_k$ corresponds to automorphisms of the span of those elements, so that we are left with a variety that parametrizes the $k$-planes in $\C^n$.

We will think about our matrices the other way. Given a $k\times n$ matrix of full rank, consider the function $e:[n]\to\C^k$ which takes $i$ to the $i$'th column of our matrix. We can then use Definition \ref{def:matroidvector} to put a matroid structure on $[n]$. Since the action of $GL_k$ clearly doesn't change which matroid we get, we have assigned a matroid in a consistent way to every point of the Grassmannian. The Pl\"ucker coordinate $p_S$ corresponding to some $S\in\binom{[n]}k$ is given by the determinant of the submatrix defined by taking the columns in $S$. So $p_S$ vanishes precisely when these $k$ columns fail to span $\C^k$, that is, precisely when $S$ fails to be a basis of our matroid.

Given a matroid $M$ of rank $k$ on $[n]$, the \defof{open matroid variety} $X^\circ(M)$ is the subset of $G(k,n)$ consisting of all points whose matroid is $M$. This is a locally closed subvariety of $G(k,n)$: it is defined by taking all Pl\"ucker coordinates corresponding to bases of $M$ to be nonzero and all the other Pl\"ucker coordinates to be zero. The closure of $X^\circ(M)$ is called the \defof{matroid variety} $X(M)$. Similarly, we can define a matroid variety inside $\Mat_{k\times n}$ in the same way. The open matroid variety in $\Mat_{k\times n}$ doesn't intersect the subvariety of matrices of less than full rank, but its closure will in general.
\end{cons}

The reader who is familiar with the definition of Schubert varieties may be tempted to ignore the definition above and take $X(M)$ to be the subvariety of $G(k,n)$ defined by setting all the Pl\"ucker coordinates corresponding to nonbases of $M$ to zero. Sadly, this is not the same: 

\begin{cex}
\label{cex:pluckergen}
Consider the rank-3 matroid $A$ on $[7]$ generated by the conditions that $\{1,2,7\}$, $\{3,4,7\}$, and $\{5,6,7\}$ have rank 2. The variety $X(A)$ is not cut out by the ideal $(p_{127},p_{347},p_{567})$. That ideal cuts out two components: $X(A)$ and the variety of the matroid in which 7 is a loop. The ideal of $X(A)$ is actually $(p_{127},p_{347},p_{567},p_{124}p_{356}-p_{123}p_{456})$.
\end{cex}

\subsection{Operations on Matroids and Matroid Varieties}

In general, as mentioned in the introduction, matroid varieties are under no obligation to be geometrically well-behaved. They don't have to be irreducible, equidimensional, normal, or even generically reduced (if given the appropriate scheme structure), and even the problem of determining whether $X(M)$ is empty or not is NP-hard (\cite{stretch}). (See \cite{vakilmurphy} for a discussion of how bad these varieties can get.) Still, our goal in this paper is to find some way to control the codimension of a matroid variety, at least in some nice cases. With this in mind, we establish some results which describe the effects of some simple matroid operations on the corresponding matroid varieties.

\begin{defns}
\label{def:directsum}
~\begin{enumerate}
\item Let $M$ be a matroid on $E$ and $N$ a matroid on $F$. The \defof{direct sum of $M$ and $N$} is the matroid $M\oplus N$ on $E\sqcup F$ defined by \[\rk_{M\oplus N}(S)=\rk_M(S\cap E)+\rk_N(S\cap F).\]
\item If $M$ is a matroid, the \defof{loop extension} of $M$ is the matroid formed by taking the direct sum of $M$ with the unique matroid of rank 0 on the one-element set $\{x\}$, so that the new element $x$ is a loop.
\item The \defof{coloop extension} of $M$ is the matroid formed by taking the direct sum of $M$ with the unique matroid of rank 1 on $\{x\}$, so that $x$ is a coloop.
\end{enumerate}
\end{defns}

It's straightforward to compute the codimension of $X(M\oplus N)$ given the codimensions of $X(M)$ and $X(N)$.

\begin{prop}
\label{prop:dimdirectsum}
If $X(M)\subseteq G(k,n)$ and $X(N)\subseteq G(k',n')$ are matroid varieties, then $\codim X(M\oplus N)=\codim X(M)+\codim X(N)$.
\end{prop}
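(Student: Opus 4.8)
The plan is to reduce the codimension statement to a dimension statement and then exhibit $X(M\oplus N)$ as essentially a product, up to a $GL_k$-type correction. Recall $M\oplus N$ has rank $k+k'$ on $[n]\sqcup[n']$, so $X(M\oplus N)\subseteq G(k+k',n+n')$. The key observation is the following: a point of $G(k+k',n+n')$ in $X^\circ(M\oplus N)$ is represented by a $(k+k')\times(n+n')$ matrix whose first $n$ columns span a $k$-dimensional space complementary to the span of the last $n'$ columns, which is $k'$-dimensional (this is exactly what the direct-sum rank function says when applied to the column sets $[n]$ and its complement). Conversely, any matrix with this block-complementarity property and with the first block realizing $M$ and the second realizing $N$ gives a point of $X^\circ(M\oplus N)$.

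First I would make this precise at the level of open matroid varieties. Let $Y^\circ\subseteq\Mat_{(k+k')\times(n+n')}$ be the locally closed set of full-rank matrices whose first $n$ columns have rank exactly $k$ and realize $M$, whose last $n'$ columns have rank exactly $k'$ and realize $N$, and whose two column-spans are complementary in $\C^{k+k'}$. There is a surjective morphism from an open subset of $GL_{k+k'}\times \widetilde{X^\circ}(M)\times\widetilde{X^\circ}(N)$ (where $\widetilde{X^\circ}$ denotes the affine-cone / matrix version in $\Mat_{k\times n}$ and $\Mat_{k'\times n'}$) onto $Y^\circ$: given $g\in GL_{k+k'}$ and matrices $A$ ($k\times n$), $B$ ($k'\times n'$), send them to $g\cdot\begin{pmatrix}A & 0\\ 0 & B\end{pmatrix}$. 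The fibers of this map are cosets of the stabilizer of the block-diagonal subspace decomposition together with the residual $GL_k\times GL_{k'}$ scaling, so the fiber dimension is $k^2 + (k')^2 + 2kk' - \big(k^2+(k')^2\big) = 2kk'$... wait — more carefully, the stabilizer in $GL_{k+k'}$ of the pair of complementary subspaces is $GL_k\times GL_{k'}$, of dimension $k^2+(k')^2$, and this is precisely absorbed by the ambiguity in $A$ and $B$, so the map from $GL_{k+k'}\times\widetilde{X^\circ}(M)\times\widetilde{X^\circ}(N)$ has fibers of dimension $k^2+(k')^2$. Counting dimensions then gives $\dim Y^\circ = (k+k')^2 + \dim\widetilde{X^\circ}(M) + \dim\widetilde{X^\circ}(N) - \big(k^2+(k')^2\big)$. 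Passing from the matrix picture to the Grassmannian (subtracting $(k+k')^2$, $k^2$, $k^2$ appropriately) and using $\codim_{G(k,n)}X(M) = k(n-k)-\dim X(M)$ etc., the cross terms should cancel and leave $\codim X(M\oplus N) = \codim X(M)+\codim X(N)$.

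Then I would pass to closures: $X(M\oplus N)=\overline{X^\circ(M\oplus N)}$ is the image closure of the domain above, and taking closures commutes with the dimension count, while one checks the generic point of $X(M\oplus N)$ indeed has matroid $M\oplus N$ (so no dimension is lost to a larger stratum). The main obstacle is the bookkeeping of the group actions and fiber dimensions — being careful that the ``$GL_k\times GL_{k'}$ inside $GL_{k+k'}$'' ambiguity is exactly the same ambiguity already quotiented when forming $X^\circ(M)$ and $X^\circ(N)$, so that it is counted once and not twice. A cleaner alternative, which I would fall back on if the direct dimension count gets unwieldy, is to observe that the rational map $G(k,n)\times G(k',n')\dashrightarrow G(k+k',n+n')$ sending a pair of subspaces $(U,W)$ to $U\oplus W\subseteq \C^n\oplus\C^{n'}$ is defined on a dense open set, is injective there, has image containing $X^\circ(M\oplus N)$ as a dense subset, and restricts to a birational map $X(M)\times X(N)\dashrightarrow X(M\oplus N)$; birational maps preserve dimension, and $\codim_{G(k+k',n+n')} = k(n-k)+k'(n'-k') + 2kk' - \dim$ while $\dim\big(X(M)\times X(N)\big) = \dim X(M)+\dim X(N)$, and the identity $(k+k')(n+n'-k-k') - k(n-k)-k'(n'-k') = kk'+kk' = 2kk'$ is exactly what makes the codimensions add. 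I expect the birational-map route to be the shortest; the only real content is verifying that the embedding $(U,W)\mapsto U\oplus W$ is a well-defined birational morphism onto its image and that it carries the generic point of $X(M)\times X(N)$ to a point of matroid exactly $M\oplus N$.
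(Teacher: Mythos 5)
Your geometric setup is sound, but the algebra at the end is wrong, and once it is corrected the argument does not deliver the stated conclusion. The map $\iota\colon G(k,n)\times G(k',n')\to G(k+k',n+n')$ sending $(U,W)$ to $U\oplus W$ is indeed a closed immersion: it is injective (recover $U=V\cap(\C^n\times 0)$ and $W=V\cap(0\times\C^{n'})$), proper, and has injective differential, since at $V=U\oplus W$ the tangent space $\mathrm{Hom}(V,\C^{n+n'}/V)$ splits into four blocks and $d\iota$ hits exactly the two ``diagonal'' ones $\mathrm{Hom}(U,\C^n/U)\oplus\mathrm{Hom}(W,\C^{n'}/W)$. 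It restricts to a bijection $X^\circ(M)\times X^\circ(N)\to X^\circ(M\oplus N)$ by the block-diagonalization you describe, and taking closures gives
\[\dim X(M\oplus N)=\dim X(M)+\dim X(N),\]
which is correct.

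The identity you then invoke, however, is false. Expanding carefully,
\[(k+k')(n+n'-k-k')-k(n-k)-k'(n'-k')=k(n'-k')+k'(n-k),\]
not $2kk'$. This quantity is exactly $\dim\mathrm{Hom}(U,\C^{n'}/W)+\dim\mathrm{Hom}(W,\C^n/U)$, the codimension of the image of $\iota$. Since the variety dimensions add, the codimensions can add only if this is zero, and it is positive whenever neither Grassmannian is a point. So the codimension statement does not follow from your dimension computation; the matrix-level count you sketch first runs into the same cross terms, which do not cancel. In fact the proposition as printed appears to be false: take $M=N=U_{1,2}$. Then $X(M)=X(N)=G(1,2)$, so both codimensions are $0$, while $X(M\oplus N)\subseteq G(2,4)$ is the (generically transverse) vanishing locus of $p_{12}$ and $p_{34}$ and has codimension $2$; one can also confirm $\ell(\pi)=2$ for the corresponding affine permutation $2,5,4,7$.

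The content your argument actually establishes, and the one the paper uses (Corollary~\ref{cor:loopcoloop} is phrased in terms of dimension), is $\dim X(M\oplus N)=\dim X(M)+\dim X(N)$, together with the explicit defect $\codim X(M\oplus N)=\codim X(M)+\codim X(N)+k(n'-k')+k'(n-k)$. If you want to salvage the ``has expected codimension'' consequence at the end of Proposition~\ref{prop:ecdirectsum}, note that the proof given there produces the same defect term (the $\codim(A)$ appearing in the sum over $A\in\SS$ is taken in $M\oplus N$, hence is $(k+k')-\rk A$ rather than $k-\rk A$, contributing an extra $k'\sum_{A\in\SS}a_\SS(A)=k'(n-k)$, and symmetrically), so the two errors cancel and the downstream conclusion survives — but the two propositions as stated should both be corrected to dimension statements or carry the correction term.
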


\begin{cor}
\label{cor:loopcoloop}
Let $X(M)\subseteq G(k,n)$ be a matroid variety. Then $\dim X(M\oplus x_0)=\dim X(M\oplus x_1)=\dim X(M)$.
\end{cor}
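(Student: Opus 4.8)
The plan is to reduce each equality to a dimension count on the space of matrices realizing the relevant matroid, as in Construction \ref{cons:pointmat}; once that is done, passing to closures is automatic, since $X(M)=\overline{X^\circ(M)}$ gives $\dim X(M)=\dim X^\circ(M)$, and likewise for the two extensions. In fact both equalities amount to the case of Proposition \ref{prop:dimdirectsum} in which the second summand $N$ is a single loop or a single coloop --- matroids whose varieties are the single points $G(0,1)$ and $G(1,1)$, which contribute nothing --- so the remaining two paragraphs just carry out the corresponding bookkeeping explicitly.

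For the loop extension I would argue directly. The matroid $M\oplus x_0$ has rank $k$ on $[n+1]$ with the new element a loop, so a $k\times(n+1)$ matrix realizes $M\oplus x_0$ exactly when its last column is zero and its first $n$ columns realize $M$. The morphism $G(k,n)\to G(k,n+1)$ sending $V$ to $V\times 0$ (equivalently, appending a zero column, $GL_k$-equivariantly, on realization spaces) therefore restricts to an isomorphism $X^\circ(M)\xrightarrow{\ \sim\ }X^\circ(M\oplus x_0)$, whence $\dim X(M\oplus x_0)=\dim X(M)$.

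For the coloop extension the picture is: $M\oplus x_1$ has rank $k+1$ on $[n+1]$ with the new element a coloop, so in a $(k+1)\times(n+1)$ matrix realizing it the first $n$ columns span a $k$-plane $H\subseteq\C^{k+1}$ in which they realize $M$, and the last column is an arbitrary vector of $\C^{k+1}\setminus H$. I would count parameters --- $\dim G(k,k+1)=k$ for the choice of $H$, then $\dim X^\circ(M)+k^2$ for the first $n$ columns with $H$ fixed, then $k+1$ for the last column --- and quotient by the free $GL_{k+1}$-action, obtaining $k+(\dim X^\circ(M)+k^2)+(k+1)-(k+1)^2=\dim X^\circ(M)$. (Equivalently: use $GL_{k+1}$ to normalize $H=\C^k\times 0$ and the last column to $e_{k+1}$; the stabilizer of this configuration is precisely the copy of $GL_k$ acting on the realizations of $M$ inside $H$.) Hence $\dim X(M\oplus x_1)=\dim X(M)$.

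The step that is not purely formal is the coloop count: I would need to check that letting the last column range over the complement of a \emph{moving} $k$-plane really is a flat family of the asserted dimension --- equivalently, that the associated $GL_{k+1}$-bundle over $X^\circ(M\oplus x_1)$ has the expected total dimension --- and that passing to the Zariski closure does not change dimension. I expect this to be the main, though still routine, obstacle; the loop case has no analogous subtlety.
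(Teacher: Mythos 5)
The paper gives no proof of this corollary (or of Proposition~\ref{prop:dimdirectsum}), so there is nothing to compare your argument against directly; I can only assess it on its own terms and point out where it meshes or clashes with what the paper states.

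Your two direct arguments are correct. The loop case is exactly right: appending a zero column gives a closed immersion $G(k,n)\hookrightarrow G(k,n+1)$ restricting to an isomorphism $X^\circ(M)\cong X^\circ(M\oplus x_0)$, and since $\dim X = \dim X^\circ$ for matroid varieties, $\dim X(M\oplus x_0)=\dim X(M)$. The coloop parameter count also checks out: $k + (\dim X^\circ(M)+k^2) + (k+1) - (k+1)^2 = \dim X^\circ(M)$, and the worry you flag about the moving $k$-plane is routine. A cleaner route that avoids it entirely is duality: $(M\oplus x_1)^* = M^*\oplus x_0$ since $(A\oplus B)^*=A^*\oplus B^*$ and the dual of a coloop is a loop, so by Proposition~\ref{prop:dualiso}, $X(M\oplus x_1)\cong X(M^*\oplus x_0)$, and you are reduced to the loop case applied to $M^*$, plus one more application of Proposition~\ref{prop:dualiso} to get $\dim X(M^*)=\dim X(M)$. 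This eliminates the only genuine subtlety in your write-up.

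One issue worth raising: your opening framing says both equalities ``amount to the case of Proposition~\ref{prop:dimdirectsum}'' with $N$ a single loop or coloop, but Proposition~\ref{prop:dimdirectsum} as printed asserts additivity of \emph{codimensions}, and that version does not yield the corollary --- the codimensions live in Grassmannians of different dimensions, and the discrepancy is $kn'+k'n-2kk'$, which is $k$ in the loop case, not $0$. (Indeed, taking $M$ the rank-$1$ matroid on one element, $X(M)=G(1,1)$ has codimension $0$, $X(x_0)=G(0,1)$ has codimension $0$, but $X(M\oplus x_0)\subset G(1,2)=\mathbb{P}^1$ is a point of codimension $1$.) The statement that does imply the corollary, and that your parameter counts in fact establish in the relevant special cases, is $\dim X(M\oplus N)=\dim X(M)+\dim X(N)$; given the label \texttt{prop:dimdirectsum}, this is almost certainly the intended reading, and your direct arguments sidestep the discrepancy. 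But you should not cite Proposition~\ref{prop:dimdirectsum} as printed as if it settled the matter --- your own computation is what carries the proof.
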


\begin{defn}
\label{def:connected}
If $M$ can't be written as a direct sum in a nontrivial way, we say that $M$ is \defof{connected}. If we write $M=\bigoplus_iA_i$ with each $A_i$ connected, then the $A_i$'s are uniquely determined, and we call them the \defof{connected components} of $M$.
\end{defn}

There are two other, equivalent ways to define connectedness:
\begin{itemize}
\item $M$ is connected if there is no proper, nonempty subset $S\subseteq E$ for which $\rk S+\rk(E-S)=\rk E$.
\item A \defof{circuit} of $M$ is a minimal dependent set, that is, a dependent set $C$ for which every proper subset is independent. We can define an equivalence relation on $E$ by saying $x$ is equivalent to $y$ if either $x=y$ or there is a circuit of $M$ containing both $x$ and $y$. The connected components of $M$ are the equivalence classes under this relation.
\end{itemize}

\begin{defn}
\label{def:dual}
Let $M$ be a matroid of rank $k$ on a set $E$. The \defof{dual} of $M$ is the matroid $M^*$ on $E$ whose bases are exactly the complements of bases of $M$.
\end{defn}

The rank of a set $S$ in $M^*$ works out to be $\#S-k+\rk(E\setminus S)$. In particular $M^*$ has rank $\#E-k$. The following result follows directly from Definition \ref{def:dual} and Construction \ref{cons:pointmat}.

\begin{prop}
\label{prop:dualiso}
There is an isomorphism $\omega:G(k,n)\to G(n-k,n)$ which takes $p_S$ to $p_{[n]-S}$ for $S\in\binom{[n]}{k}$. For a rank-$k$ matroid $M$ on $[n]$ this restricts to an isomorphism $X(M)\cong X(M^*)$.
\end{prop}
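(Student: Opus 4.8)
The plan is to realize $\omega$ as the map sending a $k$-plane $V\subseteq\C^n$ to its annihilator $V^\perp\subseteq\C^n$ under the standard nondegenerate symmetric form, an $(n-k)$-plane, and then to extract everything from the classical identity relating complementary maximal minors of a full-rank matrix and of a matrix whose rows span its kernel.

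First I would set up $\omega$ concretely. If $x\in G(k,n)$ is represented by a full-rank $k\times n$ matrix $A$, then $\omega(x)$ is represented by any full-rank $(n-k)\times n$ matrix $B$ with $AB^{t}=0$, i.e. whose rows span $\ker A$; such a $B$ is well defined up to the left $GL_{n-k}$-action, and replacing $A$ by $gA$ does not change $\ker A$, so $\omega$ is a well-defined map $G(k,n)\to G(n-k,n)$, and the analogous construction gives a two-sided inverse. That $\omega$ is a morphism (hence an isomorphism) will come out of the next step.

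Next I would establish the complementary-minors identity: if $A$ is $k\times n$ of rank $k$, $B$ is $(n-k)\times n$ of rank $n-k$, and $AB^{t}=0$, then there is a nonzero constant $c$ and signs $\varepsilon(S)\in\{\pm1\}$ depending only on $S$ (and on $k,n$) with $\det A_S=\varepsilon(S)\,c\,\det B_{[n]-S}$ for all $S\in\binom{[n]}k$, where $A_S$ is the submatrix of $A$ on the columns in $S$. I would prove this in the usual way: use the $GL_k$-action and a reordering of coordinates to bring $A$ to the form $(I_k\mid A')$, take $B=(-{A'}^{t}\mid I_{n-k})$, and compare minors directly, absorbing the $GL$-ambiguity into $c$. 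This identity does two jobs. In homogeneous Plücker coordinates it exhibits $\omega$ as the restriction of the linear isomorphism $\wedge^k\C^n\to\wedge^{n-k}\C^n$, $e_S\mapsto\varepsilon(S)\,e_{[n]-S}$, whose inverse is also regular, so $\omega$ is an isomorphism of varieties; and after composing with a suitable coordinate-rescaling automorphism of $G(n-k,n)$ we may take $\omega$ to send $p_S$ to $p_{[n]-S}$ on the nose. It also shows that for every $x$ and every $S$, $p_S(x)=0$ if and only if $p_{[n]-S}(\omega(x))=0$.

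Finally I would read off the conclusion. By Construction \ref{cons:pointmat}, a set $S\in\binom{[n]}{n-k}$ is a basis of the matroid of $\omega(x)$ exactly when $p_S(\omega(x))\ne0$, i.e. (by the previous step) exactly when $p_{[n]-S}(x)\ne0$, i.e. exactly when $[n]-S$ is a basis of the matroid of $x$; by Definition \ref{def:dual} this says precisely that the matroid of $\omega(x)$ is the dual of the matroid of $x$. Hence $\omega$ carries $X^\circ(M)$ isomorphically onto $X^\circ(M^*)$, and being an isomorphism of the ambient Grassmannians it takes closures to closures, so $X(M)\cong X(M^*)$. The only point requiring any care is the sign bookkeeping in the complementary-minors identity, which is entirely classical; as the argument shows, the signs affect neither the isomorphism statement nor the identification of matroids.
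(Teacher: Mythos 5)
Your argument is correct and spells out in full the classical Grassmannian-duality argument that the paper leaves implicit (the paper simply asserts that the proposition ``follows directly from Definition~\ref{def:dual} and Construction~\ref{cons:pointmat}''). The orthogonal-complement construction of $\omega$, the complementary-minors identity $\det A_S=\varepsilon(S)\,c\,\det B_{[n]-S}$, the absorption of $\varepsilon(S)$ by a diagonal rescaling in $GL_n$ (which indeed works, e.g.\ $d_i=\alpha(-1)^i$ with $\alpha^{n-k}=(-1)^{\binom{n+1}{2}-\binom{k+1}{2}}$), and the final identification of bases via vanishing of Pl\"ucker coordinates are all exactly the steps the paper is invoking.
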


It's straightforward to check that $M$ is connected if and only if $M^*$ is. In fact, $(A\oplus B)^*=A^*\oplus B^*$.

Finally, for a matroid $M$ on a set $E$, we define two different ways to put the structure of a matroid on a subset of $E$. One of them corresponds to restricting to a subspace of a vector space, and the other corresponds to taking a quotient of vector spaces.

\begin{defn}
\label{def:restrictcontract}
Suppose $S\subseteq E$. The \defof{restriction of $M$ to $S$} is the matroid $M|_S$ on $S$ in which the rank of any subset of $S$ is just its rank in $M$. In particular, $\rk(M|_S)=\rk S$. We'll sometimes also refer to this matroid as the result of \defof{deleting} $E-S$. In this context, when we refer to $S$ itself as a matroid, we will always mean the restriction to $S$.

The \defof{contraction of $S$} is the matroid $M/S$ on $E-S$ in which the rank of any set $T$ is $\rk_M(T\cup S)-\rk_M(S)$. In particular, $\rk(M/S)=\rk_ME-\rk_MS$.
\end{defn}

It is important to note that these two constructions are dual to each other. That is, $(M|_S)^*=M^*/(E-S)$, and $(M/S)^*=M^*|_{E-S}$.

\section{Properties of the Expected Codimension}
We now study how $\ec_\SS$ chages for different choices of $\SS$. Throughout this section, $M$ is a matroid of rank $k$ on a set $E$.

First, it will be helpful to write $\ec$ in a more symmetrical way. Thinking of $\SS$ as a poset under containment, write $\mu_{\SS}$ for its M\"obius function. Then the fact that \[c(S)=\sum_{T\subseteq S,\ T\in\SS}a_{\SS}(T)\] tells us that we can write \[a_{\SS}(S)=\sum_{T\in\SS} c(T)\mu_{\SS}(T,S),\] which means that \[\ec_{\SS}(M)=\sum_{S,T\in\SS}c(T)(k-\rk S)\mu_{\SS}(T,S).\] (Note that this is the same as summing over only the pairs $S,T$ with $T\subseteq S$, since if $T\not\subseteq S$, $\mu(T,S)=0$.) From this perspective, it seems natural to define a version of $a_{\SS}$ which splits up the sum the other way, that is, we define \[b_{\SS}(T)=\sum_{S\in\SS}(k-\rk S)\mu_{\SS}(T,S),\] and from here we may clearly write \[\ec_{\SS}(M)=\sum_{T\in\SS}c(T)b_{\SS}(T).\]

A small advantage of singling out $b$ is that it clarifies the behavior of these operations under dualization:

\begin{prop}
If $\SS\subseteq\mathcal{P}(E)$ is some collection of sets, let $\SS'=\{E-S:S\in\SS\}$. Then:
\begin{enumerate}
\item $\ec_{\SS}(M)=\ec_{\SS'}(M^*)$
\item For $S\in\SS$, $a_{\SS}(S)=b_{\SS'}(E-S)$, where the latter is computed in $M^*$.
\end{enumerate}
\end{prop}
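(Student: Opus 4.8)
The plan is to reduce everything to two elementary dictionary identities between $M$ and $M^*$ under complementation, together with the behavior of the M\"obius function under an order-reversing bijection. Recall from the discussion before Proposition \ref{prop:dualiso} that $\rk_{M^*}(T)=\#T-k+\rk_M(E\setminus T)$. Applying this with $T=E-S$ gives, after cancellation, the two identities $c_{M^*}(E-S)=k-\rk_M(S)$ and $\rk(M^*)-\rk_{M^*}(E-S)=\#S-\rk_M(S)=c_M(S)$. In words: the ``number of constrained elements'' factor attached to $E-S$ in $M^*$ is the ``codimension'' factor attached to $S$ in $M$, and vice versa. This is exactly the symmetry that the functions $a$ and $b$ are built to exploit.

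Next, observe that $S\mapsto E-S$ is an order-reversing bijection from $\SS$ (ordered by inclusion) onto $\SS'$, so for $T\subseteq S$ in $\SS$ we have $\mu_{\SS'}(E-S,E-T)=\mu_{\SS}(T,S)$, since the M\"obius function of a poset interval agrees with that of its opposite with the two arguments swapped. I would prove part (2) first, as part (1) will follow from it. Expand $b_{\SS'}(E-S)$, computed in $M^*$, as $\sum_{U\in\SS'}(\rk(M^*)-\rk_{M^*}U)\,\mu_{\SS'}(E-S,U)$, substitute $U=E-T$ with $T$ ranging over $\SS$, and apply the two facts just mentioned: $\rk(M^*)-\rk_{M^*}(E-T)=c_M(T)$ and $\mu_{\SS'}(E-S,E-T)=\mu_{\SS}(T,S)$. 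The result is exactly $\sum_{T\in\SS}c_M(T)\mu_{\SS}(T,S)=a_{\SS}(S)$, which is (2).

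Finally, (1) is immediate from (2): starting from $\ec_{\SS}(M)=\sum_{S\in\SS}(k-\rk_M S)\,a_{\SS}(S)$, rewrite $k-\rk_M S=c_{M^*}(E-S)$ and $a_{\SS}(S)=b_{\SS'}(E-S)$, and note that as $S$ runs over $\SS$ the set $E-S$ runs over $\SS'$; this turns the sum into $\sum_{U\in\SS'}c_{M^*}(U)\,b_{\SS'}(U)=\ec_{\SS'}(M^*)$. I do not expect any genuine obstacle here; the only point requiring care is that the ``$k$'' appearing in the definition of $b$ must be read as $\rk(M^*)=\#E-k$ when $b$ is computed in $M^*$, and it is precisely this that makes the rank bookkeeping in the previous paragraph balance. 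One could alternatively prove (1) directly from the symmetric expression $\ec_{\SS}(M)=\sum_{S,T\in\SS}c_M(T)(k-\rk_M S)\mu_{\SS}(T,S)$ by substituting $S\mapsto E-S$ and $T\mapsto E-T$ and invoking the same dictionary, but routing through (2) is cleaner and yields both parts at once.
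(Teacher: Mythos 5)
Your argument is correct and uses exactly the same two ingredients as the paper's proof: the rank dictionary $\rk_{M^*}(E-S)=\#(E-S)-k+\rk_M S$ (yielding $c_{M^*}(E-S)=k-\rk_M S$ and $\rk(M^*)-\rk_{M^*}(E-S)=c_M(S)$) and the identity $\mu_{\SS'}(E-S,E-T)=\mu_\SS(T,S)$ coming from $\SS'$ being the opposite poset of $\SS$. The only difference is organizational: the paper reads (1) directly off the symmetric double-sum expression for $\ec_\SS$ and observes (2) is immediate from the same perspective, whereas you prove (2) first and deduce (1) from it — a reordering you yourself note is interchangeable with the paper's route.
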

\begin{proof}
The rank of $E-S$ in $M^*$ is $\#(E-S)-k+\rk_MS$. So $c(E-S)$ in $M^*$ is $k-\rk_MS$, and $c(S)$ in $M$ is $k-\rk_{M^*}(E-S)$. So since \[\ec_{\SS}=\sum_{S,T\in\SS}c(T)(k-\rk S)\mu_{\SS}(T,S),\] (1) follows from the fact that $\SS'$ is the opposite poset to $\SS$, and therefore $\mu_{\SS'}(E-S,E-T)=\mu_{\SS}(T,S)$. From this perspective, (2) is also immediate.
\end{proof}

What is the point of going through this? Our immediate goal is to determine the conditions under which the expected codimension can be computed with respect to some set other than $\mathcal{P}(E)$ and still give the same answer. To figure this out, it would be enough to establish a condition for when $\ec_{\SS}(M)=\ec_{\SS-\{Z\}}(M)$ for some set $Z$. In fact, we can do a little better:

\begin{prop}
\label{prop:removechange}
If $\SS\subseteq\mathcal{P}(E)$ and $Z\in\SS$, then
\begin{enumerate}
\item $\ec_{\SS}(M)-\ec_{\SS-\{Z\}}(M)=a_{\SS}(Z)b_{\SS}(Z)$.
\item For $S\in\SS-\{Z\}$, $a_{\SS}(S)-a_{\SS-\{Z\}}(S)=a_{\SS}(Z)\mu_{\SS}(Z,S)$.
\item For $S\in\SS-\{Z\}$, $b_{\SS}(S)-b_{\SS-\{Z\}}(S)=\mu_{\SS}(S,Z)b_{\SS}(Z)$.
\end{enumerate}
\end{prop}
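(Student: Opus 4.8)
The plan is to reduce all three parts to a single standard fact about how the M\"obius function of a finite poset changes when one element is deleted. The first step is to establish, for the poset $\SS$ (with the order induced from $\mathcal{P}(E)$) and any $S,T\in\SS-\{Z\}$, the deletion formula
\[\mu_{\SS-\{Z\}}(S,T)=\mu_{\SS}(S,T)-\mu_{\SS}(S,Z)\mu_{\SS}(Z,T).\]
To prove this I would check that the right-hand side, viewed as a function of $(S,T)$ on the subposet $\SS-\{Z\}$, satisfies the two properties that characterize $\mu_{\SS-\{Z\}}$. On the diagonal it equals $1$, since $\mu_{\SS}(S,Z)\mu_{\SS}(Z,S)\neq 0$ would force $S\le Z\le S$, i.e. $S=Z\notin\SS-\{Z\}$. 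For $S<T$ one must show the partial sums over $W\in[S,T]_{\SS-\{Z\}}$ vanish; this comes from the telescoping identity $\sum_{W\in[S,T]_{\SS}}\mu_{\SS}(S,W)=0$, because passing from the interval $[S,T]_{\SS}$ to $[S,T]_{\SS-\{Z\}}$ just deletes the term $W=Z$ (present exactly when $Z\in[S,T]_{\SS}$), and a short computation shows those deleted terms are precisely cancelled by the correction $\mu_{\SS}(S,Z)\mu_{\SS}(Z,T)$. This is a classical lemma, so I would either cite it or include this two-line verification.

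With the deletion formula in hand, parts (2) and (3) are the same computation transposed. For (2), start from the closed form $a_{\SS}(S)=\sum_{T\in\SS}c(T)\mu_{\SS}(T,S)$ recorded just before the proposition, split off the $T=Z$ summand, and on the remaining sum replace $\mu_{\SS}(T,S)$ by $\mu_{\SS-\{Z\}}(T,S)+\mu_{\SS}(T,Z)\mu_{\SS}(Z,S)$ using the lemma. The factor $\mu_{\SS}(Z,S)$ pulls out, and the bracket that remains is $c(Z)\mu_{\SS}(Z,Z)+\sum_{T\in\SS-\{Z\}}c(T)\mu_{\SS}(T,Z)=\sum_{T\in\SS}c(T)\mu_{\SS}(T,Z)=a_{\SS}(Z)$, which is exactly (2). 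For (3) I would run the identical argument starting instead from $b_{\SS}(S)=\sum_{T\in\SS}(k-\rk T)\mu_{\SS}(S,T)$; now the common factor that pulls out is $\mu_{\SS}(S,Z)$, and the leftover bracket is $\sum_{T\in\SS}(k-\rk T)\mu_{\SS}(Z,T)=b_{\SS}(Z)$.

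Finally, (1) follows from (2) with no further appeal to M\"obius functions. Write $\ec_{\SS}(M)=\sum_{S\in\SS}(k-\rk S)a_{\SS}(S)$, peel off the $S=Z$ term, and on the sum over $\SS-\{Z\}$ substitute $a_{\SS}(S)-a_{\SS-\{Z\}}(S)=a_{\SS}(Z)\mu_{\SS}(Z,S)$ from (2). Then $a_{\SS}(Z)$ factors out and the remaining bracket is $(k-\rk Z)+\sum_{S\in\SS-\{Z\}}(k-\rk S)\mu_{\SS}(Z,S)=\sum_{S\in\SS}(k-\rk S)\mu_{\SS}(Z,S)=b_{\SS}(Z)$, giving $a_{\SS}(Z)b_{\SS}(Z)$. (One could equally deduce (1) from (3) via $\ec_{\SS}(M)=\sum_{T\in\SS}c(T)b_{\SS}(T)$.)

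The only genuine content here is the M\"obius deletion lemma, and the only place that needs care is the bookkeeping in its proof --- keeping track of exactly when $Z$ lies in a given interval $[S,T]$ and confirming that the terms removed from the two telescoping sums reassemble into the correction term. Everything downstream is formal rearrangement of finite sums, so I expect steps (1)--(3) themselves to be routine once the lemma is in place.
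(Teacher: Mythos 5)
Your proof is correct and rests on the same key identity as the paper's: the M\"obius deletion formula $\mu_{\SS}(T,S)-\mu_{\SS-\{Z\}}(T,S)=\mu_{\SS}(T,Z)\mu_{\SS}(Z,S)$. The only differences are cosmetic --- you verify that formula by checking the defining recursion for $\mu_{\SS-\{Z\}}$, whereas the paper derives it from Philip Hall's alternating chain-count formula, and you prove (2)--(3) first and deduce (1), whereas the paper proves (1) directly and reads off (2)--(3) --- but the substance is identical.
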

\begin{proof}
We have \[\ec_{\SS}(M)-\ec_{\SS-\{Z\}}(M)=\sum_{S,T\in\SS}c(T)(k-\rk S)(\mu_{\SS}(T,S)-\mu_{\SS-\{Z\}}(T,S))\]
if we take $\mu_{\SS-\{Z\}}(T,S)$ to be zero if either $T$ or $S$ is equal to $Z$. Recall that the M\"obius function can be defined by setting $\mu_{\SS}(X,Y)$ to be the sum over all chains in $\SS$ connecting $X$ to $Y$ of $(-1)^c$ where $c$ is the length of the chain. So $\mu_{\SS}(T,S)-\mu_{\SS-\{Z\}}(T,S)$ is going to be the alternating sum of lengths of chains in $\SS$ connecting $T$ to $S$ through $Z$; all other chains will appear in both sums and therefore cancel.

Write $q_k(X,Y)$ for the number of length-$k$ chains in $\SS$ connecting $X$ to $Y$. The number of length-$c$ chains connecting $T$ to $S$ through $Z$ is clearly equal to \[\sum_{k=0}^cq_k(T,Z)q_{c-k}(Z,S),\] which means that \[\mu_{\SS}(T,S)-\mu_{\SS-\{Z\}}(T,S)=\sum_c\sum_k(-1)^k(-1)^{c-k}q_k(T,Z)q_{c-k}(Z,S),\] which is just $\mu_{\SS}(T,Z)\mu_{\SS}(Z,S)$.

Therefore our difference of expected codimensions works out to be \[\sum_{S,T\in\SS}c(T)(k-\rk S)\mu_{\SS}(T,Z)\mu_{\SS}(Z,S)=a_{\SS}(Z)b_{\SS}(Z).\]

Dropping in this expression for the difference of M\"obius functions into the earlier expression of $a$, we see that \[a_{\SS}(S)-a_{\SS-\{Z\}}(S)=\sum_Tc(T)\mu_{\SS}(T,Z)\mu_{\SS}(Z,S)=a_{\SS}(Z)\mu_{\SS}(Z,S),\] and again similarly for $b$.
\end{proof}

\begin{cor}
\label{cor:removemnz}
Given $\AA\subseteq\SS$, if $a_{\SS}(A)=0$ for each $A\in\AA$, then $\ec_{\SS-\AA}(M)=\ec_{\SS}(M)$, and similarly with $a$ replaced with $b$.
\end{cor}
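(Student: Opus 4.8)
The plan is to induct on $\#\AA$, peeling off one element at a time and invoking Proposition \ref{prop:removechange}. The base case $\AA=\varnothing$ is immediate. For the inductive step, fix any $Z\in\AA$ and put $\SS'=\SS-\{Z\}$; note that $\AA-\{Z\}\subseteq\SS'$ since $\AA\subseteq\SS$.

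First I would treat the statement for $a$. By hypothesis $a_{\SS}(Z)=0$, so part (1) of Proposition \ref{prop:removechange} gives $\ec_{\SS}(M)-\ec_{\SS'}(M)=a_{\SS}(Z)b_{\SS}(Z)=0$, i.e.\ $\ec_{\SS'}(M)=\ec_{\SS}(M)$. The key point is that the hypothesis survives the removal: part (2) of the same proposition says that for every $S\in\SS'$ we have $a_{\SS}(S)-a_{\SS'}(S)=a_{\SS}(Z)\mu_{\SS}(Z,S)=0$, so $a_{\SS'}$ agrees with $a_{\SS}$ on all of $\SS'$; in particular $a_{\SS'}(A)=0$ for every $A\in\AA-\{Z\}$. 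Applying the inductive hypothesis to the pair $\SS'$, $\AA-\{Z\}$ yields $\ec_{\SS'-(\AA-\{Z\})}(M)=\ec_{\SS'}(M)=\ec_{\SS}(M)$, and $\SS'-(\AA-\{Z\})=\SS-\AA$, which closes the induction.

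The statement for $b$ is proved the same way, reading (1) in the other direction and using (3) in place of (2): if $b_{\SS}(Z)=0$ then part (1) again gives $\ec_{\SS}(M)-\ec_{\SS'}(M)=a_{\SS}(Z)b_{\SS}(Z)=0$, while part (3) gives $b_{\SS}(S)-b_{\SS'}(S)=\mu_{\SS}(S,Z)b_{\SS}(Z)=0$ for all $S\in\SS'$, so $b_{\SS'}\equiv b_{\SS}$ on $\SS'$ and the hypothesis $b=0$ on $\AA-\{Z\}$ is preserved; the same induction then applies.

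I don't anticipate a genuine obstacle: all the content lives in Proposition \ref{prop:removechange}, and the only thing requiring care is that deleting one vanishing element does not perturb the $a$- (resp.\ $b$-) values of the remaining elements of $\AA$ — which is precisely what parts (2) and (3) guarantee once $a_{\SS}(Z)=0$ (resp.\ $b_{\SS}(Z)=0$). The bookkeeping of set differences under iterated removal is the only routine check.
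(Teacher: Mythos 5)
Your proof is correct and is essentially the paper's argument, just written out more formally as an explicit induction on $\#\AA$: both remove elements of $\AA$ one at a time, use part (1) of Proposition \ref{prop:removechange} to see that $\ec$ is unchanged, and use part (2) (resp.\ part (3) for the $b$-version) to see that the remaining elements still have vanishing $a$ (resp.\ $b$) so the process can continue.
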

\begin{proof}
Remove the elements of $\AA$ from $\SS$ one by one. By part 1 of the proposition, removing something for which $a=0$ doesn't change $\ec$, and by part 2, the remaining elements of $\AA$ will still have $a=0$ after some have been removed. The argument is exactly analogous for $b$.
\end{proof}

This result will be a lot more useful if we can find a lot of sets for which $a$ and $b$ are zero. Luckily, we can:

\begin{prop}
\label{prop:disconnected}
Suppose that $S\in\SS$ is disconnected, say $S=\bigoplus_iS_i$. Suppose further that, for each $T\subseteq S$ for which $T\in\SS$, we also have that each connected component of $T$ is in $\SS$. Then $a_\SS(S)=0$.
\end{prop}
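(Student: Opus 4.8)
The plan is to exploit the multiplicativity of the quantities $c(S)$ and $a_{\SS}$ under the direct sum decomposition $S = \bigoplus_i S_i$. The starting observation is that since $S$ is disconnected, for \emph{any} $T \subseteq S$ we have $\rk T = \sum_i \rk(T \cap S_i)$, and of course $\#T = \sum_i \#(T \cap S_i)$, so $c(T) = \sum_i c(T \cap S_i)$. In particular $c$ is additive over the ``coordinates'' $S_i$. The hypothesis that every $T \in \SS$ below $S$ has all its connected components in $\SS$ is exactly what lets us turn this additivity of $c$ into a statement about $a_{\SS}$: it guarantees that the recursion defining $a_{\SS}(T)$ only ever refers to sets that are themselves (disjoint unions corresponding to) genuine subsets of $\SS$, so the bookkeeping closes up.

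First I would set up the right inductive claim. For a nonempty subset $I$ of the index set and a set $T \subseteq S$, write $T_I = T \cap \bigsqcup_{i\in I} S_i$. I would prove by induction on $\#T$ (equivalently, induction up the poset $\SS$ restricted to subsets of $S$) the identity
\[
a_{\SS}(T) \;=\; \sum_{\varnothing \ne I} a_{\SS}(T_I),
\]
where the sum is over all nonempty subsets $I$ of the index set \emph{for which $T_I \in \SS$}, with the convention that terms with $T_I \notin \SS$ are dropped — and crucially, when $T$ itself is connected, $T = T_I$ for exactly one singleton $I$ and the identity is vacuous, while the content is entirely in the disconnected case. Actually the cleaner formulation: if $T\subseteq S$ is disconnected with components partitioned according to the $S_i$, say $T$ meets $S_i$ in $T^{(i)}$ (a possibly empty union of components of $T$), then
\[
a_{\SS}(T) \;=\; \sum_{i} a_{\SS}\bigl(T^{(i)}\bigr)
\]
provided each $T^{(i)}$ lies in $\SS$, which is guaranteed by the component hypothesis since the $T^{(i)}$ are unions of connected components of $T$. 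Wait — the $T^{(i)}$ need not be single components, so I should instead induct with the hypothesis stated componentwise from the start; let me phrase the induction as: for every $T\in\SS$ with $T\subsetneq S$, writing $T = \bigsqcup_j C_j$ for its connected components, one has $a_{\SS}(T) = \sum_j a_{\SS}(C_j)$ — each $C_j\in\SS$ by hypothesis, and each $C_j$ is connected so it does not decompose further.

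The inductive step runs as follows. Given disconnected $T\in\SS$ below $S$ with components $C_1,\dots,C_m$ ($m\ge 2$), the recursion gives $a_{\SS}(T) = c(T) - \sum_{U\in\SS,\,U\subsetneq T} a_{\SS}(U)$. By additivity of $c$, $c(T) = \sum_j c(C_j)$. Each proper subset $U\subsetneq T$ in $\SS$ has connected components, each of which is a subset of some single $C_j$ and lies in $\SS$ (by hypothesis, since $U\subseteq T \subsetneq S$); by the inductive hypothesis $a_{\SS}(U) = \sum (\text{its components})$, and grouping the components by which $C_j$ they sit inside lets me re-sum $\sum_{U\subsetneq T} a_{\SS}(U)$ as $\sum_j \bigl(\sum_{V\in\SS,\,V\subsetneq C_j} a_{\SS}(V) + \text{(correction for } V=C_j\text{?})\bigr)$ — I need to be careful that $U = C_j$ itself is an allowed proper subset of $T$, contributing $a_{\SS}(C_j)$, and that the ``mixed'' subsets get correctly accounted. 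The key combinatorial point is that a subset $U\subsetneq T$ corresponds to a choice of subset $V_j \subseteq C_j$ for each $j$ with not all $V_j = C_j$, and $a_{\SS}(U) = \sum_j a_{\SS}(V_j)$, so $\sum_{U\subsetneq T} a_{\SS}(U) = \sum_j \sum_{V_j\subseteq C_j} a_{\SS}(V_j)\cdot(\text{multiplicity}) $ — this multiplicity count is where I must be most careful, and it is the main obstacle. Once the multiplicities are handled, comparing with $c(C_j) = \sum_{V\subseteq C_j,\,V\in\SS} a_{\SS}(V)$ (the defining relation for $C_j$, valid since $C_j\in\SS$) collapses everything and yields $a_{\SS}(T) = \sum_j a_{\SS}(C_j)$.

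Finally, I apply the established identity to $S$ itself: its components $S_i$ lie in $\SS$ by hypothesis, so $a_{\SS}(S) = \sum_i a_{\SS}(S_i)$ — but wait, that is not obviously $0$. So the real finish must use that $S$ is in $\SS$ \emph{and} each $S_i$ is in $\SS$, and apply the defining recursion for $a_{\SS}(S)$ directly together with the componentwise identity for the proper subsets: $a_{\SS}(S) = c(S) - \sum_{U\subsetneq S} a_{\SS}(U) = \sum_i c(S_i) - \sum_i\bigl(\text{sum over proper-and-full subsets of }S_i\bigr)$, and the $U = $ (everything but one coordinate) type terms together with $U$ ranging appropriately must telescope so that $a_{\SS}(S) = \sum_i c(S_i) - \sum_i\bigl(c(S_i) - a_{\SS}(S_i)\bigr) - (\text{extra}) $. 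The cleanest route, which I expect to actually use: prove the multiplicative identity $\sum_{U\subseteq T,\, U\in\SS} a_{\SS}(U) = c(T)$ is the defining property, and then show by the same inclusion–exclusion that for disconnected $S$ with all components present, the "full" coefficient $a_{\SS}(S)$ must vanish because $c$ has no "interaction" term across the blocks — formally, $a_{\SS}(S) = \sum_{U\subseteq S,\,U\in\SS}\mu_{\SS}(U,S)\,c(U) = \sum_U \mu_{\SS}(U,S)\sum_i c(U\cap S_i)$, and for each fixed $i$, $\sum_{U} \mu_{\SS}(U,S) c(U\cap S_i)$ vanishes because the map $U \mapsto U\cap S_i$ is a poset map with a section and one can factor the Möbius sum. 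The main obstacle throughout is the honest verification of the Möbius/multiplicity bookkeeping under the hypothesis that only \emph{some} subsets are in $\SS$; the component hypothesis is precisely the hook that makes it go through, and I would isolate it as a lemma about $c$ being "additive along a direct-sum decomposition" before feeding it into a Möbius-inversion computation.
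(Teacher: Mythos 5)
Your proposal has a genuine gap, and it centers on the intermediate identity you propose to establish: that for disconnected $T\in\SS$ with connected components $C_1,\dots,C_m$, one has $a_{\SS}(T)=\sum_j a_{\SS}(C_j)$. This is false. For instance, take $E=\{1,2\}$ with both elements loops, $\SS=\mathcal{P}(E)$. Then $c(\{1\})=c(\{2\})=1$, so $a_{\SS}(\{1\})=a_{\SS}(\{2\})=1$, while $a_{\SS}(\{1,2\})=c(\{1,2\})-a_{\SS}(\{1\})-a_{\SS}(\{2\})=2-1-1=0\ne 2$. You notice the symptom yourself when you reach $S$ (``but wait, that is not obviously $0$''), but the underlying problem is that the identity already fails for proper subsets, so the induction you sketch never gets off the ground. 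The final pivot to M\"obius inversion and a claim that ``$U\mapsto U\cap S_i$ is a poset map with a section so the M\"obius sum factors'' is asserted, not proved; $\SS$ restricted to $\{U\subseteq S\}$ is an essentially arbitrary poset of subsets subject only to the component-closure condition, and it does not in general decompose as a product, so the standard factorization lemma for M\"obius functions of product posets does not apply. You flag this honestly as ``the main obstacle,'' and it is — the bookkeeping is the proof.

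The actual argument is simpler and more direct: induct on $\#S$, with the induction hypothesis being the statement of the proposition itself (that $a_{\SS}=0$ on disconnected sets satisfying the component-closure condition), not a decomposition formula for $a_{\SS}$. In $a_{\SS}(S)=c(S)-\sum_{T\subsetneq S,\ T\in\SS}a_{\SS}(T)$, any $T$ meeting more than one $S_i$ is itself disconnected (it inherits a direct-sum decomposition from $S$), and it inherits the component-closure hypothesis since its subsets are subsets of $S$, so $a_{\SS}(T)=0$ by induction. The remaining $T$ lie in a single $S_i$; taking the $S_i$ to be the connected components of $S$, the hypothesis guarantees each $S_i\in\SS$, so the defining relation applied to $S_i$ gives $\sum_{T\subseteq S_i,\ T\in\SS}a_{\SS}(T)=c(S_i)$. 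Therefore $a_{\SS}(S)=c(S)-\sum_i c(S_i)=0$ by additivity of $c$. Your instinct to lean on additivity of $c$ across the blocks and on the component-closure hypothesis was right; the missing idea was to induct on the vanishing statement directly rather than trying to prove a (false) additivity of $a_{\SS}$.
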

\begin{proof}
Recall that \[a_\SS(S)=c(S)-\sum_{T\subsetneq S}a_{\SS}(T).\] Any $T\subsetneq S$ which intersects more than one of the $S_i$'s is disconnected, so for those sets we may inductively conclude that $a_\SS(T)=0$. We are left only with sets that are contained in one of the $S_i$'s. To handle those, we note that $\sum_{T\subseteq S_i}a_{\SS}(T)=c(S_i)$. So we are left with $a_\SS(S)=c(S)-\sum_ic(S_i)$. It is simple to check that $c$ is additive in direct sums, so this zero.
\end{proof}

Simply by dualizing everything, we get a version of this statement about $b$. Suppose that $S\in\SS$ is such that $M/S$ is disconnected, and that whenever $T\supseteq S$ is in $\SS$, say $M/T=\bigoplus A_i$, we have each $T\cup A_i\in\SS$. Then $b_\SS(S)=0$.

In particular, Proposition \ref{prop:disconnected} and Corollary \ref{cor:removemnz} together imply that, starting with all of $\mathcal{P}(E)$, we can remove any number of disconnected sets, or any number of sets $S$ for which $M/S$ is disconnected, and end up with the same expected codimension, because the extra condition in Proposition \ref{prop:disconnected} will be trivially satisfied. Note that it doesn't say that we can remove sets of both kinds at the same time: Proposition \ref{prop:removechange} tells us that removing sets for which $b=0$ doesn't change values of $b$ for other sets, but values of $a$ can and will change.

First we need a lemma:

\begin{lem}
\label{lem:connectedness}
Suppose that $M$ is connected and that $S\subseteq M$ is connected. Say $M/S=\bigoplus_iA_i$ where each $A_i$ is connected in $M/S$. Then each $A_i\cup S$ is connected in $M$.
\end{lem}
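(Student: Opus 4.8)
\textit{Proof proposal.} My plan is to run the whole argument through the rank characterization of connectedness stated after Definition~\ref{def:connected}: a matroid $N$ fails to be connected precisely when its ground set is a disjoint union $P\sqcup Q$ of nonempty sets with $\rk_N(P)+\rk_N(Q)=\rk_N(\text{ground set})$. Since ranks in the restriction $M|_{A_i\cup S}$ are just ranks in $M$, it suffices to rule out such a ``separation'' of $A_i\cup S$. First I dispose of the case $A_i\cup S=E$, where $M|_{A_i\cup S}=M$ is connected by hypothesis; so assume $A_i\cup S\subsetneq E$, and note this forces $S\neq\varnothing$, since if $S=\varnothing$ then $M/S=M$ is connected and has the single component $A_i=E$. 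Now suppose for contradiction that $A_i\cup S=P\sqcup Q$ with $P,Q$ nonempty and $\rk_M(P)+\rk_M(Q)=\rk_M(A_i\cup S)$.

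The first step is to argue that $S$ lies entirely inside $P$ or inside $Q$. A separation $\rk_M(P)+\rk_M(Q)=\rk_M(P\cup Q)$ makes $M|_{P\cup Q}$ the direct sum $M|_P\oplus M|_Q$ (a standard fact, immediate from submodularity), so in particular $\rk_M(S)=\rk_M(S\cap P)+\rk_M(S\cap Q)$. Since $M|_S$ is connected this splitting of $S$ must be trivial, and after relabeling we may assume $S\subseteq P$. Then $Q\subseteq A_i$ (the ground set of $M/S$ is $E\setminus S$, so $A_i$ is disjoint from $S$), $Q\neq\varnothing$, and $P=(A_i\setminus Q)\sqcup S$.

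Next I push the separation down to $M/S$. Using $\rk_{M/S}(T)=\rk_M(T\cup S)-\rk_M(S)$ together with the submodular bound $\rk_{M/S}(Q)=\rk_M(Q\cup S)-\rk_M(S)\le\rk_M(Q)$, a short computation gives
\[ \rk_{M/S}(A_i\setminus Q)+\rk_{M/S}(Q)\;\le\;\rk_{M/S}(A_i\setminus Q)+\rk_M(Q)\;=\;\bigl(\rk_M(P)-\rk_M(S)\bigr)+\rk_M(Q)\;=\;\rk_M(A_i\cup S)-\rk_M(S)\;=\;\rk_{M/S}(A_i), \]
while submodularity in $M/S$ gives the opposite inequality. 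Hence equality holds, so $A_i\setminus Q$ and $Q$ form a separation of $(M/S)|_{A_i}$; provided $A_i\setminus Q\neq\varnothing$, this contradicts the connectedness of the component $A_i$ of $M/S$.

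The leftover case $Q=A_i$, hence $P=S$, is the point I expect to be the real obstacle: here the separation sits ``above $S$'', the contraction picture degenerates, and one must reflect the separation back to $M$ itself. In this case $\rk_M(A_i)=\rk_M(A_i\cup S)-\rk_M(S)=\rk_{M/S}(A_i)$. Writing $C=E\setminus(A_i\cup S)=\bigcup_{j\neq i}A_j$ and using $M/S=\bigoplus_j A_j$, one gets $\rk_M(E\setminus A_i)=\rk_M(S\cup C)=\rk_M(S)+\rk_{M/S}(C)=\rk_M(S)+\sum_{j\neq i}\rk_{M/S}(A_j)$ and $\rk_M(E)=\rk_M(S)+\sum_j\rk_{M/S}(A_j)$, so that $\rk_M(A_i)+\rk_M(E\setminus A_i)=\rk_M(E)$. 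Since $A_i$ is nonempty and proper in $E$ (properness being exactly the assumption $A_i\cup S\subsetneq E$ together with $A_i\subseteq A_i\cup S$), this contradicts the connectedness of $M$, completing the proof. Apart from this last wrinkle, everything is pure bookkeeping among the rank functions of $M$, $M|_{A_i\cup S}$, and $M/S$.
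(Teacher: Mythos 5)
Your proof is correct, and it takes a genuinely different route from the paper's. The paper argues structurally: assuming $A:=A_i\cup S$ is disconnected, it uses that a connected set must sit inside a single component of $A$ and that $A/S\cong A_i$ being connected forces $S$ to contain all but one component, concluding that $S$ is itself a connected component of $A$, say $A=S\oplus C$; the rank identity $\rk M-\rk S=(\rk A-\rk S)+(\rk B-\rk S)$ then immediately gives $M=B\oplus C$. In effect the paper shows that the only possible separation of $A$ is the one with $P=S$, $Q=C$, and jumps straight to your final case. Your proof instead works purely with the rank characterization of connectedness and submodularity, splitting into two cases depending on whether the separation $(P,Q)$ has $Q\subsetneq A_i$ (push the separation down to $M/S$ and contradict connectedness of the component $A_i$) or $Q=A_i$ (lift the separation up to $M$ and contradict connectedness of $M$). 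The tradeoff: the paper's argument is shorter and more conceptual, but relies on unstated facts about connected components (in particular the ``dual'' step that a connected contraction forces $S$ to absorb all but one component); your version is longer but entirely self-contained rank bookkeeping, with every inequality traceable to submodularity. Your computations in both branches check out, including the verification that equality in the submodular chain yields a genuine separation of $(M/S)|_{A_i}$, and the nonemptiness conditions ($A_i\neq\varnothing$ as a component, $E\setminus A_i\supseteq S\neq\varnothing$) that make the final contradiction bite.
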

\begin{proof}
Suppose $A_i\cup S$ is disconnected. Write $A=A_i\cup S$ and $B=\bigcup_{j\ne i}A_j\cup S$, so that $M/S=(A-S)\oplus(B-S)$. Because $S$ is connected, it must be contained in a connected component of $A$. Dually, since $A/S\cong A_i$ is connected, $S$ must contain all but one connected component of $A$. So in fact $S$ is a connected component of $A$, say $A=S\oplus C$.

We know that $\rk M-\rk S=(\rk A-\rk S)+(\rk B-\rk S)$, but our decomposition of $A$ gives us that the right-hand side is $\rk C+\rk B-\rk S$, so $\rk M=\rk B+\rk C$. So in fact $M=B\oplus C$, contradicting the connectedness of $M$.
\end{proof}

Note that by applying the theorem inductively to the $A_i$'s themselves, we get that any $S\cup\bigcup_{i\in I}A_i$ is also connected. Again we can extract a dual version of this statement: if $M$ and $M/S$ are connected but $S=\bigoplus B_i$ with each $B_i$ connected, then each $M/B_i$ is connected.

\begin{thm}
\label{thm:ecinvariance}
Suppose that $M$ is connected, that $\SS$ contains every set $S$ for which both $S$ and $M/S$ are connected, and that whenever $S\in\SS$, all of the connected components of $S$ are also in $\SS$. Then $\ec_{\SS}(M)=\ec(M)$.
\end{thm}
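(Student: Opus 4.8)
The plan is to start from $\mathcal{P}(E)$, where the expected codimension is by definition $\ec(M)$, and to delete the sets lying outside $\SS$ in two waves, applying Corollary \ref{cor:removemnz} at each wave. Since $\SS$ contains every set $S$ for which both $S$ and $M/S$ are connected, any $S\notin\SS$ must have $S$ disconnected or $M/S$ disconnected. Let $\mathcal{D}_1$ be the collection of $S\notin\SS$ with $M/S$ disconnected, and $\mathcal{D}_2$ the collection of $S\notin\SS$ with $S$ disconnected and $M/S$ connected; then $\mathcal{D}_1$ and $\mathcal{D}_2$ are disjoint and their union is $\mathcal{P}(E)\setminus\SS$.

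\emph{First wave.} Delete $\mathcal{D}_1$ from $\mathcal{P}(E)$. By the dual form of Proposition \ref{prop:disconnected}, $b_{\mathcal{P}(E)}(S)=0$ for every $S$ with $M/S$ disconnected: the auxiliary condition there --- that whenever $T\supseteq S$ is in the collection and $M/T=\bigoplus_iA_i$, each $T\cup A_i$ is in the collection too --- holds trivially when the collection is all of $\mathcal{P}(E)$. Corollary \ref{cor:removemnz}, in its version for $b$, then gives $\ec_{\SS^{(1)}}(M)=\ec(M)$, where $\SS^{(1)}:=\mathcal{P}(E)\setminus\mathcal{D}_1$. Unwinding the definitions, $\SS^{(1)}$ consists of $\SS$ together with every disconnected set $S$ whose contraction $M/S$ is connected; in particular $\SS^{(1)}\setminus\SS=\mathcal{D}_2$.

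\emph{Second wave.} Delete $\mathcal{D}_2$ from $\SS^{(1)}$. For each $S\in\mathcal{D}_2$ I apply Proposition \ref{prop:disconnected} to get $a_{\SS^{(1)}}(S)=0$; the only thing to check is its auxiliary condition, namely that every $T\subseteq S$ belonging to $\SS^{(1)}$ has all of its connected components in $\SS^{(1)}$. If $T\in\SS$ this is exactly the component-closure hypothesis on $\SS$. Otherwise $T$ is one of the other members of $\SS^{(1)}$, i.e.\ $T$ is disconnected with $M/T$ connected; writing $T=\bigoplus_jB_j$ into connected components, the dual form of Lemma \ref{lem:connectedness} --- valid because $M$ is connected --- shows each $M/B_j$ is connected, so each $B_j$, being connected with connected contraction, lies in $\SS\subseteq\SS^{(1)}$. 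Hence $a_{\SS^{(1)}}(S)=0$ for all $S\in\mathcal{D}_2$, and Corollary \ref{cor:removemnz}, in its version for $a$, yields $\ec_{\SS}(M)=\ec_{\SS^{(1)}\setminus\mathcal{D}_2}(M)=\ec_{\SS^{(1)}}(M)=\ec(M)$.

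The one genuine subtlety is the order of the two waves. The hypothesis on $\SS$ is a closure property under connected components but not under the dual operation, so we cannot verify the auxiliary condition of the dual of Proposition \ref{prop:disconnected} directly for $\SS^{(1)}$; we must clear the sets with disconnected contraction first, while the full power set is available and that condition is free. After that, Lemma \ref{lem:connectedness} controls the remaining non-$\SS$ members of $\SS^{(1)}$ tightly enough that the component-closure hypothesis suffices for the second wave. Everything else is routine bookkeeping.
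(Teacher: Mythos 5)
Your proof is correct and follows essentially the same path as the paper's: remove the sets with disconnected contraction first (dual of Proposition \ref{prop:disconnected} plus Corollary \ref{cor:removemnz}), then handle the remaining disconnected sets via the dual form of Lemma \ref{lem:connectedness} and Proposition \ref{prop:disconnected} again. Your presentation is somewhat crisper than the paper's --- you explicitly verify the auxiliary hypothesis of Proposition \ref{prop:disconnected} by the case split $T\in\SS$ versus $T\in\mathcal{D}_2$, where the paper instead runs an inline induction whose logic is harder to parse --- but the decomposition into two waves, the ordering of those waves, and the role of Lemma \ref{lem:connectedness} in controlling the second wave are all the same.
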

\begin{proof}
Starting with all of $\mathcal{P}(E)$, using Corollary \ref{cor:removemnz} we may remove every set $S$ for which $M/S$ is disconnected and $S\notin\SS$. Call the resulting collection $\TT$. If $T\in\TT-\SS$, we know that $M/T$ is connected, or we would have removed it already. So $T$ must be disconnected, or else it would be in $\SS$. Write $T=\bigoplus_i T_i$ with each $T_i$ connected.

We know that the $T_i$'s themselves are in $\TT$: each $M/T_i$ is connected by the dual version of Lemma \ref{lem:connectedness}, so in fact each $T_i\in\SS$. Suppose $U\subsetneq T$ and $U\in\TT$. If $U\in\TT-\SS$, then $a_\TT(U)=0$ by induction. Otherwise, $U\in\SS$, so its connected components are in $\SS$ by hypothesis, and we again can conclude inductively that $a_\TT(U)=0$. This is enough to be able to apply Proposition \ref{prop:disconnected} to get that $a_\TT(T)=0$.

So by applying Corollary \ref{cor:removemnz} once more, we may remove every set in $\TT-\SS$, which gives the result.
\end{proof}

Note that, in particular, Lemma \ref{lem:connectedness} implies that taking $\SS$ to be the collection of \emph{all} sets $S$ for which both $S$ and $M/S$ is connected will satisfy the hypotheses of Theorem \ref{thm:ecinvariance}. (These sets are called \defof{flacets}, and come up in the study of matroid polytopes. See \cite[2.6]{matpoly}.)

Expected codimension turns out to be well-behaved under direct sums:

\begin{prop}
\label{prop:ecdirectsum}
Let $M$ and $N$ be matroids on sets $E$ and $F$, and take collections $\SS\subseteq\mathcal{P}(E)$ and $\TT\subseteq\mathcal{P}(F)$. In $\mathcal{P}(E\cup F)$, let \[\AA=\SS\cup\TT\cup\{S\cup T:S\in\SS,\ T\in\TT\}.\] Then $\ec_\AA(M\oplus N)=\ec_\SS(M)+\ec_\TT(N)$.
\end{prop}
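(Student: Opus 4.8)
The plan is to exploit the formula $\ec_\AA(M\oplus N)=\sum_{X,Y\in\AA}c(Y)(k''-\rk X)\mu_\AA(Y,X)$, where $k''=\rk(M\oplus N)=\rk M+\rk N=:k+k'$, and to analyze the poset $\AA$. The first observation is that $\AA$, ordered by inclusion, decomposes as a product: a set $X\in\AA$ is determined by the pair $(X\cap E, X\cap F)$, and $X\in\AA$ if and only if $X\cap E$ is either empty or in $\SS$ and $X\cap F$ is either empty or in $\TT$ (with the caveat that at least one of $\varnothing\in\SS$, $\varnothing\in\TT$ may fail, which I address below). Modulo that bookkeeping, $\AA\cong\SS^+\times\TT^+$ as posets, where $\SS^+=\SS\cup\{\varnothing\}$ and $\TT^+=\TT\cup\{\varnothing\}$. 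I would first reduce to the case $\varnothing\in\SS$ and $\varnothing\in\TT$: by Corollary~\ref{cor:removemnz}, $\varnothing$ always has $a=b=0$ (since $c(\varnothing)=0$ and $\varnothing$ is the bottom element), so adjoining or deleting it changes neither $\ec_\SS(M)$, $\ec_\TT(N)$, nor $\ec_\AA(M\oplus N)$; hence we may as well assume both contain $\varnothing$, and then $\AA\cong\SS\times\TT$ on the nose.

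The second ingredient is multiplicativity of the three relevant functions across the product. The M\"obius function of a product poset factors: $\mu_\AA\big((Y_1\cup Y_2),(X_1\cup X_2)\big)=\mu_\SS(Y_1,X_1)\,\mu_\TT(Y_2,X_2)$ for $Y_i\subseteq X_i$. Next, $c$ is additive in direct sums — this is exactly the computation used in the proof of Proposition~\ref{prop:disconnected}: for $X=X_1\cup X_2$ with $X_1\subseteq E$, $X_2\subseteq F$, we have $\rk_{M\oplus N}(X)=\rk_M(X_1)+\rk_N(X_2)$ by Definition~\ref{def:directsum}, so $c(X)=c_M(X_1)+c_N(X_2)$. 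Finally, the ``codimension'' factor splits as $k''-\rk_{M\oplus N}(X)=(k-\rk_M X_1)+(k'-\rk_N X_2)$. Plugging all of this into the M\"obius formula for $\ec_\AA$ and writing each summand's indices as $X=X_1\cup X_2$, $Y=Y_1\cup Y_2$, the sum becomes
\[
\sum_{\substack{X_1,Y_1\in\SS\\ X_2,Y_2\in\TT}}\big(c_M(Y_1)+c_N(Y_2)\big)\big((k-\rk_M X_1)+(k'-\rk_N X_2)\big)\mu_\SS(Y_1,X_1)\mu_\TT(Y_2,X_2).
\]
Expanding the product of the two binomials gives four terms. The cross terms vanish: e.g. the term with $c_M(Y_1)(k'-\rk_N X_2)$ factors as $\big(\sum_{X_1,Y_1}c_M(Y_1)\mu_\SS(Y_1,X_1)\big)\big(\sum_{X_2,Y_2}(k'-\rk_N X_2)\mu_\TT(Y_2,X_2)\big)$, and the first factor is $\sum_{Y_1\in\SS}c_M(Y_1)\sum_{X_1\supseteq Y_1}\mu_\SS(Y_1,X_1)=\sum_{Y_1}c_M(Y_1)\cdot[\,Y_1\text{ is maximal in }\SS\,]\cdot(\ldots)$ — more carefully, $\sum_{X_1\in\SS}\mu_\SS(Y_1,X_1)=0$ unless $Y_1$ is the top element of $\SS$, and handling that requires care, so let me instead kill the cross terms by recognizing the inner sum over $X_1$ as producing $b_\SS$-type or trivial contributions. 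Actually the cleanest route: the $X_2,Y_2$ part of that factor is $\sum_{Y_2\in\TT}\big(\sum_{X_2\supseteq Y_2}(k'-\rk_N X_2)\mu_\TT(Y_2,X_2)\big)=\sum_{Y_2}b_\TT(Y_2)$, which need not vanish — so the cross terms do \emph{not} obviously die, and I must be more careful. I expect this to be the main obstacle, and the resolution is to not expand blindly: instead note $\sum_{X_1\in\SS,\,Y_1\in\SS}c_M(Y_1)\mu_\SS(Y_1,X_1)\cdot(\text{anything independent of }Y_1)$ forces summing $\mu_\SS(Y_1,X_1)$ over $Y_1\le X_1$, which is $0$ unless $X_1=\varnothing$, and $c_M(\varnothing)=0$. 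So the correct pairing is: in each cross term, one factor is $\sum_{Y_i\le X_i}c(Y_i)\mu(Y_i,X_i)=a(X_i)$-free — wait, that's $a_\SS(X_i)$ itself, not zero. I will need to sum the \emph{other} way.

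Let me restate the endgame cleanly. Group the quadruple sum so that the two surviving terms are $c_M(Y_1)(k-\rk_M X_1)$ and $c_N(Y_2)(k'-\rk_N X_2)$. The first gives $\big(\sum_{X_1,Y_1\in\SS}c_M(Y_1)(k-\rk_M X_1)\mu_\SS(Y_1,X_1)\big)\cdot\big(\sum_{X_2,Y_2\in\TT}\mu_\TT(Y_2,X_2)\big)$; the first factor is $\ec_\SS(M)$ and the second is $\sum_{X_2\in\TT}\big(\sum_{Y_2\le X_2}\mu_\TT(Y_2,X_2)\big)=\sum_{X_2}[X_2=\varnothing]=1$ since $\varnothing\in\TT$. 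So this term contributes exactly $\ec_\SS(M)$; symmetrically the $c_N(Y_2)(k'-\rk_N X_2)$ term contributes $\ec_\TT(N)$. For the two cross terms, say $c_M(Y_1)(k'-\rk_N X_2)$: it factors as $\big(\sum_{Y_1\le X_1}c_M(Y_1)\mu_\SS(Y_1,X_1)\text{ summed over }X_1\big)\cdot\big(\sum_{Y_2\le X_2}(k'-\rk_N X_2)\mu_\TT(Y_2,X_2)\big)$; the first factor is $\sum_{X_1\in\SS}a_\SS(X_1)$... no — it is $\sum_{X_1}\sum_{Y_1}c_M(Y_1)\mu_\SS(Y_1,X_1)$, and since $\sum_{X_1\ge Y_1}\mu_\SS(Y_1,X_1)=0$ for $Y_1\neq\top_\SS$... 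I will instead sum over $X_1$ first only after fixing $Y_1$: $\sum_{Y_1}c_M(Y_1)\sum_{X_1\ge Y_1}\mu_\SS(Y_1,X_1)$, and $\sum_{X_1\ge Y_1,\,X_1\in\SS}\mu_\SS(Y_1,X_1)=0$ unless $Y_1$ is the unique maximal element, which we need not have. So the honest fix is: the cross terms are genuinely present termwise but cancel in pairs or sum to zero by the identity $\sum_{X\in\SS}\mu_\SS(Y,X)=\delta_{Y,\hat1}$ only when $\SS$ has a top — so I should NOT assume that, and instead observe that the full original sum over $\AA$ never had cross-term structure until I expanded, meaning the expansion is valid and I simply compute $\sum_{X_1\in\SS}\mu_\SS(Y_1,X_1)$ honestly. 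I will present the argument by fixing $Y=Y_1\cup Y_2$ and summing over $X\supseteq Y$ in $\AA$ first, yielding $b_\AA(Y)=\sum_{X\supseteq Y}(k''-\rk X)\mu_\AA(Y,X)$, then show $b_\AA(Y_1\cup Y_2)=b_\SS(Y_1)[Y_2=\varnothing\text{ or top}]+\ldots$ — and the genuinely clean statement, which I'll prove directly, is $b_\AA(Y_1\cup Y_2)=\delta_{Y_2,\varnothing}\,b_\SS(Y_1)+\delta_{Y_1,\varnothing}\,b_\TT(Y_2)$, using that for $Y_2\in\TT$, $\sum_{X_2\in\TT,\,X_2\supseteq Y_2}\mu_\TT(Y_2,X_2)=0$ whenever $Y_2\neq$ maximal, combined more carefully with the $X_2=Y_2$ diagonal; from this, $\ec_\AA=\sum_{Y}c(Y)b_\AA(Y)=\sum_{Y_1\in\SS}c_M(Y_1)b_\SS(Y_1)+\sum_{Y_2\in\TT}c_N(Y_2)b_\TT(Y_2)=\ec_\SS(M)+\ec_\TT(N)$, where I used $c(Y_1\cup\varnothing)=c_M(Y_1)$ and $c(\varnothing)=0$. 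I expect verifying the precise formula for $b_\AA$ on the product poset — in particular correctly handling elements $Y$ with both $Y_1,Y_2$ nonempty, for which I claim $b_\AA(Y)=0$ — to be the one step needing genuine care, and the tool for it is the product formula for $\mu$ together with $\sum_{X\ge Y}\mu(Y,X)=0$ applied factorwise once one of the factors ranges over a nontrivial interval.
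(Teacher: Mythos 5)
Your product-poset approach is a genuinely different route from the paper's. The paper's proof is short and direct: it applies Proposition~\ref{prop:disconnected} to every $A\in\AA$ meeting both $E$ and $F$ to get $a_\AA(A)=0$, notes $a_\AA(A)=a_\SS(A)$ for $A\in\SS$ (and likewise for $\TT$), and sums. You instead factor $\AA$ as a product of posets (after the harmless reduction to $\varnothing\in\SS,\varnothing\in\TT$, which is handled correctly) and push the product formula for $\mu$ through the definitions of $a$ and $b$. Both are reasonable, but there are two problems.

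The concrete error in your writeup is the claimed identity $b_\AA(Y_1\cup Y_2)=\delta_{Y_2,\varnothing}\,b_\SS(Y_1)+\delta_{Y_1,\varnothing}\,b_\TT(Y_2)$. For the $\delta_{Y_2,\varnothing}$ factor you'd need $\sum_{X_2\supseteq Y_2}\mu_\TT(Y_2,X_2)$ to be $1$ at $Y_2=\varnothing$ and $0$ otherwise; but $\varnothing$ is the \emph{bottom} of $\TT$, not the top, so that sum has no reason to collapse (take $\TT=\{\varnothing,\{1\},\{2\}\}$ and $Y_2=\varnothing$ to see it equal $-1$). The identity that does hold from your setup is its mirror image for $a$: since $\varnothing\in\TT$ is the minimum, $\sum_{Y_2\subseteq X_2}\mu_\TT(Y_2,X_2)=\delta_{\varnothing,X_2}$, and hence
\[
a_\AA(X_1\cup X_2)=\delta_{\varnothing,X_2}\,a_\SS(X_1)+\delta_{\varnothing,X_1}\,a_\TT(X_2),
\]
which agrees with what Proposition~\ref{prop:disconnected} gives in the paper's version.

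The second problem is more serious, and your honest M\"obius bookkeeping exposes it. Substituting the corrected formula for $a_\AA$ into $\ec_\AA(M\oplus N)=\sum_{X\in\AA}a_\AA(X)\bigl(\rk(M\oplus N)-\rk X\bigr)$, and using that for $X_1\subseteq E$ the codimension factor is $(k+k')-\rk_{M\oplus N}X_1=(k-\rk_M X_1)+k'$ (not $k-\rk_M X_1$), you get
\[
\ec_\AA(M\oplus N)=\ec_\SS(M)+\ec_\TT(N)+k'\!\!\sum_{X_1\in\SS}a_\SS(X_1)+k\!\!\sum_{X_2\in\TT}a_\TT(X_2),
\]
and the last two correction terms do not vanish in general. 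For example, take $M=U_{1,2}$, $N=U_{1,1}$, $\SS=\mathcal{P}(E)$, $\TT=\mathcal{P}(F)$: then $\ec(M)=\ec(N)=0$ but $\ec(M\oplus N)=1$, with the contribution coming from $\{1,2\}$ whose $a$-value is $1$ and whose codimension factor in the rank-$2$ ambient matroid is $1$, not $0$. In other words, the proposition as stated is not quite right; the paper's last chain of equalities silently replaces the codimension factor $(k+k')-\rk_{M\oplus N}A$ by $k-\rk_M A$. (The companion Proposition~\ref{prop:dimdirectsum} has the matching issue --- the quantity additive under $\oplus$ is $\dim X(\cdot)$, not $\codim X(\cdot)$ --- and the two discrepancies cancel in the application ``if $M,N$ have expected codimension, so does $M\oplus N$,'' so the downstream results survive.) So before patching your $b$-formula, you should first pin down the corrected statement you are actually trying to prove.
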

\begin{proof}
Take $A\in\AA$. If $A$ is a union of nonempty sets from $\SS$ and $\TT$, then it satisfies the hypotheses of Proposition \ref{prop:disconnected}. Otherwise, if $A\in\SS$, then $a_\AA(A)=a_\SS(A)$, and similarly for $\TT$. So in fact, \begin{align*}\ec_\AA(M\oplus N)&=\sum_{A\in\AA}a_\AA(A)\codim(A)\\&=\sum_{A\in\SS}a_\SS(A)\codim(A)+\sum_{A\in\TT}a_\TT(A)\codim(A)\\&=\ec_\SS(M)+\ec_\TT(N).\end{align*}
\end{proof}

In particular, using Proposition \ref{prop:dimdirectsum}, we see that if $M$ and $N$ have expected codimension, so does $M\oplus N$. Since it is trivial to check that both matroids on a one-element set have expected codimension, this also applies to loop and coloop extensions.

\begin{figure}[t]
\includegraphics[width=8cm]{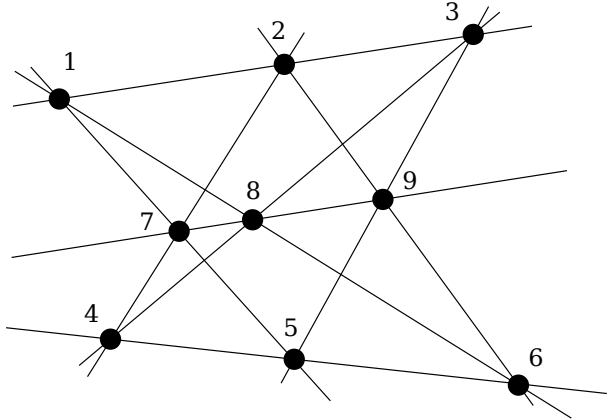}
\caption{A projective model of the ``Pappus matroid.''}
\label{fig:pappus}
\end{figure}

We conclude this section with an example of a matroid that doesn't have expected codimension:

\begin{cex}
\label{cex:pappus}
Consider the \defof{Pappus matroid} $P$, the rank-3 matroid on $[9]$ generated by the collinearities in Figure \ref{fig:pappus}. The only sets $S\subseteq[9]$ for which both $S$ and $P/S$ are connected are the nine sets of points which lie on lines in the picture. (That is, 123, 456, 789, 157, 168, 247, 269, 348, and 359.) From this we can easily compute that $\ec(P)=9$. However, the actual codimension of $X(P)$ in $G(3,9)$ is 8. This can be (and was) computed directly with a computer algebra system like Macaulay2; it also follows from computations performed in \cite{fnrmat}. Either way, $P$ doesn't have expected codimension.

This should not be especially surprising: the whole point of the Pappus matroid is that it demonstrates Pappus's theorem, that is, the fact that given any eight of the collinearities in Figure \ref{fig:pappus}, the ninth comes for free. Our definition of expected codimension is unable to keep track of ``global'' constraints like this one, so it treats all nine rank conditions as independent.
\end{cex}

\section{Positroid Varieties}
It is, as we have already observed, hopeless to expect to be able to say anything especially nice about the expected codimension of a general matroid variety, and we have already seen an example where our definition fails to produce the actual codimension of a matroid variety. There is, however, a much nicer class of matroids for which we will be able to say a lot more.

\begin{prop}
\label{prop:posdefs}
If $M$ is a matroid of rank $k$ on $[n]$, the following are equivalent:
\begin{enumerate}
\item $M$ is generated by rank conditions on cyclic intervals. (That is, cyclic permutations of intervals.)
\item $M$ is the matroid of a collection of vectors $v_1,\ldots,v_n$ in $\R^k$ for which all $k\times k$ minors are nonnegative.
\item $X(M)$ is the image of a Richardson variety in the flag variety $Fl(n)$ under the natural projection map $Fl(n)\to G(k,n)$.
\end{enumerate}
\end{prop}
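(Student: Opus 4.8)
The plan is to split the three-way equivalence into two essentially independent pieces: conditions (1) and (2) are matched up using the combinatorics of the totally nonnegative Grassmannian, while conditions (2) and (3) are tied together by the structure theory of positroid varieties. Throughout I would freely use that a matroid ``generated by rank conditions on cyclic intervals'' in the sense of the construction sketched earlier is the same thing as an intersection of cyclically shifted \emph{Schubert matroids}: a single chain of nested rank bounds on a cyclic interval generates a shifted Schubert matroid, and taking the pointwise-largest rank function compatible with a union of such chains amounts to intersecting the corresponding Schubert matroids, provided the result is a matroid at all.

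\textbf{(2) $\Rightarrow$ (1).} Suppose $M$ is realized by vectors $v_1,\dots,v_n\in\R^k$ all of whose $k\times k$ minors are nonnegative. I would attach to this realization its Grassmann necklace: for each $j\in[n]$ let $I_j$ be the basis of $M$ that is lexicographically minimal with respect to the cyclically shifted order $j\prec j{+}1\prec\cdots\prec j{-}1$. The key point, due to Postnikov and refined by Oh, is that total nonnegativity forces $M$ to be as large as possible given the $I_j$; concretely $M=\bigcap_{j=1}^n SM_{I_j}$, where $SM_{I_j}$ is the Schubert matroid of $I_j$ taken with respect to $\prec$. Each $SM_{I_j}$ is cut out by rank bounds on initial segments of the order $\prec$, and those initial segments are exactly the cyclic intervals starting at $j$, so $M$ is generated by rank conditions on cyclic intervals.

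\textbf{(1) $\Rightarrow$ (2).} Conversely, if $M$ is an intersection of cyclically shifted Schubert matroids then its $j$-minimal bases form a Grassmann necklace, which in turn corresponds to a decorated permutation and a Le-diagram. Postnikov's explicit parametrization of the totally nonnegative Grassmannian (boundary measurements of the associated plabic graph, or equivalently the Le-diagram formula) then produces an honest matrix over $\R$ with all maximal minors nonnegative. What has to be checked is that the matroid of this matrix is exactly $M$, and not some strictly larger matroid; this is precisely Oh's theorem that the matroids of totally nonnegative matrices coincide with the intersections of cyclically shifted Schubert matroids, which I would cite rather than reprove.

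\textbf{(2) $\Leftrightarrow$ (3).} For the last equivalence I would appeal to the work of Knutson, Lam and Speyer on positroid varieties: the projection $\pi\colon Fl(n)\to G(k,n)$ carries every Richardson variety onto a positroid variety, every positroid variety is obtained this way, and $\pi$ matches the open Richardson cell with the open positroid cell. Combined with the fact that the positroid variety attached to a totally nonnegative cell is the closure $X(M)$ of the open matroid variety of the corresponding matroid $M$, this yields (2) $\Leftrightarrow$ (3). The main obstacle in the whole proposition is really the direction (1) $\Rightarrow$ (2): manufacturing a genuine totally nonnegative matrix realizing a prescribed cyclic-interval matroid \emph{on the nose} requires both Postnikov's parametrization and Oh's matroid-theoretic identification, and I expect most of the work to be in assembling these cited results correctly rather than in any new argument.
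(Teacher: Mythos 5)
Your proposal takes the same route as the paper: the paper's proof of this proposition consists entirely of two citations (to the Oh/Postnikov circle of results for the equivalence of (1) and (2), and to Knutson--Lam--Speyer for (2) and (3)), and those are exactly the sources and results you lean on. You have usefully unpacked what those references actually contain --- Grassmann necklaces, Oh's identification of positroids with intersections of cyclically shifted Schubert matroids, the boundary-measurement parametrization, and the projection of Richardsons --- but there is no difference in strategy.
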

\begin{proof}
For the equivalence of (1) and (2), see \cite{posschub}. For (2) and (3), see \cite{klspos}.
\end{proof}

\begin{defn}
\label{def:positroid}
A matroid satisfying any of the equivalent conditions just listed is called a \defof{positroid}, and its matroid variety is called a \defof{positroid variety}. (Positroids were first introduced and studied in \cite{totalpos}.)
\end{defn}

Note that, in particular, Schubert and Richardson matroids are positroids. Positroid varieties have many nice geometric properties \cite{projrich}. In particular, they are always reduced, irreducible, and Cohen-Macaulay, and unlike general matroid varieties (see Counterexample \ref{cex:pluckergen}) they are always cut out by Pl\"ucker variables. Positroids are very well-studied already, and there are several different combinatorial gadgets that have been invented to describe them, some of which are described in \cite{klspos}.

Because a positroid is generated by rank conditions on cyclic intervals, we can describe it completely by listing the rank of each cyclic interval.

\begin{defn}[\cite{klspos}]
\label{def:posrankmat}
Take a positroid $P$ on $[n]$. We'll think of elements $[n]$ as representatives of equivalence classes of integers mod $n$ with the obvious cyclic order (that is, 1 comes right after $n$), and we will use interval notation with this in mind; for example, if $n=6$, then we'll write $[5,8]=[5,2]=\{5,6,1,2\}$. In particular, $[5,5]=\{5\}$, whereas $[5,11]=[5,10]=[5,4]=\{5,6,1,2,3,4\}$. We can form a \defof{cyclic rank matrix} by setting $r_{ij}=\rk_P([i,j])$ for any integers $i,j$ with $0\le j-i\le n$.
\end{defn}

For any such matrix of numbers, the following conditions are necessary and sufficient for it to have arisen from this procedure:
\begin{itemize}
\item Each $r_{ii}$ is 0 or 1.
\item For any $i,j$, either $r_{i-1,j}=r_{ij}$ or $r_{i-1,j}=r_{ij}+1$, and similarly for $r_{i,j+1}$.
\item If $r_{ij}=r_{i-1,j}=r_{i,j+1}$, then $r_{i-1,j+1}=r_{ij}$.
\end{itemize}

\begin{defn}[\cite{klspos}]
\label{def:posaffine}
Given a cyclic rank matrix corresponding to a positroid on $[n]$, we can form an \defof{affine permutation}. This will be a bijection $\pi:\Z\to\Z$ such that $i\le\pi(i)\le i+n$ and $\pi(i+n)=\pi(i)+n$ for all $i$. We define $\pi$ as a matrix by putting a 1 in position $(i,j)$ if $r_{ij}=r_{i,j-1}=r_{i+1,j}\ne r_{i+1,j-1}$ and putting a 0 there otherwise. One can check that each row and each column will have exactly one 1. Note that to describe $\pi$ it's enough to describe the images of the elements of $[n]$.

One can reverse this process to read off the rank matrix from the affine permutation matrix. Given an interval $[i,j]$, consider the entries of the matrix weakly southwest of position $(i,j)$ in the affine permutation matrix. (That is, positions $(k,l)$ for which $[k,l]\subseteq[i,j]$.) The rank of $[i,j]$ then works out to be $\#[i,j]-d$, where $d$ is the number of 1's among the these entries. Equivalently, thinking of the affine permutation as a function, we can say \[d=\#\{k:i,k,\pi(k),\pi(i)\mbox{ occur cyclically consecutively}\}.\]

It's also possible to determine the codimension of a positroid variety from an affine permutation. For an affine permutation $\pi$, define the \defof{length} of $\pi$, written $l(\pi)$, as the number of \defof{inversions}, that is, the number of pairs $i,k$ with $i,k,\pi(k),\pi(i)$ occurring cyclically consecutively. Each of these pairs will correspond to a pair of 1's in the affine permutation matrix arranged southwest-to-northeast. Then $l(\pi)$ is the codimension of the positroid variety corresponding to $\pi$. (This is proved in \cite[5.9]{klspos}.)
\end{defn}

The affine permutation gives a simple way to determine which rank conditions are necessary to define $X(P)$:

\begin{defn}
\label{def:posessential}
The \defof{essential set} of an affine permutation is defined by the following procedure: cross out all the positions strictly below or to the left of a 1 in the affine permutation matrix, and take the positions which are at the upper-right corners of their connected components. (This definition follows Fulton's description in \cite{fultonessential}, though he did not refer to positroid varieties.)
\end{defn}

By convention, we don't take positions on the upper-right edge of the matrix (that is, ones where $j-i=n$) to be essential. Imposing the rank conditions corresponding to the essential intervals are enough to define a positroid variety in $G(k,n)$ as a scheme.

\begin{exmp}
\label{ex:positroid}
The positroid of rank 3 on $[6]$ generated by forcing $[1,3]$, $[3,5]$, and $[5,1]$ to have rank 2 has the following cyclic rank matrix:

\[
\begin{array}{cccccccccccc}
1&2&\underline{\mathbf 2}&3&3&3&3& & & & &\\
 &1&2&3&3&\underline{\mathbf 3}&3&3& & & &\\
 & &1&2&\underline{\mathbf 2}&3&3&3&3& & &\\
 & & &1&2&3&3&\underline{\mathbf 3}&3&3& &\\
 & & & &1&2&\underline{\mathbf 2}&3&3&3&3&\\
 & & & & &1&2&3&3&\underline{\mathbf 3}&3&3\\
\end{array}
\]

We think of the matrix as repeating infinitely in the northwest and southeast directions. So, for example, the 3 printed in the fourth row and fourth nonempty column indicates that $[4,7]=[4,1]=\{4,5,6,1\}$ has rank 3. The underlined entries are the positions of the 1's in the corresponding affine permutation matrix. We'll sometimes write affine permutations as functions, listing the image of each element of $[n]$ in order. So, for example, this one is $3,6,5,8,7,10$.
\end{exmp}

We have aleady seen in Counterexample \ref{cex:pappus} a case in which the expected codimenion of a matroid variety fails to line up with its actual codimension in the Grassmannian. The main result of this section is that that doesn't happen for positroids:

\begin{thm}
\label{thm:posexpcodim}
Positroids have expected codimension.
\end{thm}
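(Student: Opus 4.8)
The plan is to shrink, in stages, the collection of subsets over which $\ec$ is computed, until it is small enough that $\ec_\SS(P)$ can be read off from the cyclic rank matrix of $P$ and compared directly with the length of the affine permutation. The first stage is a reduction to the connected case: the connected components of a positroid are cyclic intervals, and the restriction of a positroid to a cyclic interval is again a positroid, so every positroid is a direct sum of connected positroids, and Proposition~\ref{prop:ecdirectsum} together with Proposition~\ref{prop:dimdirectsum} shows that $M\oplus N$ has expected codimension whenever $M$ and $N$ do. So it suffices to treat a connected positroid $P$ of rank $k$ on $[n]$ (the case $n=1$ being trivial); in particular $P$ has no loops or coloops.

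\emph{Reduction to cyclic intervals.} Since $P$ is connected, Theorem~\ref{thm:ecinvariance}, applied with $\SS$ the collection $\mathrm{Fl}$ of all flacets (legitimate by the remark after that theorem), gives $\ec(P)=\ec_{\mathrm{Fl}}(P)$. Now every flacet of a connected positroid is a cyclic interval: the matroid polytope of a connected positroid is cut out by the cyclic-interval inequalities $\sum_{a\in[i,j]}x_a\le r_{ij}$, so its facets — equivalently its flacets — correspond to cyclic intervals. Among these, the flacets $S$ with $\rk S=k$ (namely $[n]$ and the intervals $[n]\setminus\{x\}$ for which $P|_{[n]\setminus\{x\}}$ is connected) satisfy $k-\rk T=0$ for every $T\supseteq S$ in $\mathrm{Fl}$, so they contribute nothing to $\ec_{\mathrm{Fl}}(P)$ and, being maximal among non-total flacets, never enter the recursion defining $a_{\mathrm{Fl}}$ on a flacet of smaller rank; hence they may be dropped (formally via Corollary~\ref{cor:removemnz}, since $b_{\mathrm{Fl}}=0$ on them). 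We conclude that $\ec(P)=\ec_\SS(P)$, where $\SS$ is the family of flacets of $P$ of rank $<k$ — a distinguished collection of cyclic intervals whose ranks are entries of the cyclic rank matrix.

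\emph{Matching $\ec_\SS(P)$ with the codimension.} By Definition~\ref{def:posaffine}, $\codim X(P)=l(\pi)$, the number of inversions of the affine permutation $\pi$ of $P$, so it remains to prove the combinatorial identity $\ec_\SS(P)=l(\pi)$. I would argue by induction on $l(\pi)$ (one could also hope to compute both sides directly as sums over the affine permutation matrix and match them cell by cell). If $l(\pi)=0$ then $P=U_{k,n}$, whose only flacets of rank $<k$ are the singletons, each with $a_\SS=0$, so $\ec_\SS(P)=0=l(\pi)$. If $l(\pi)>0$, choose an affine simple transposition $s$ with $l(\pi s)=l(\pi)-1$; then $\pi':=\pi s$ is again a bounded affine permutation of rank $k$, with positroid $P'$, and $\ec(P')=l(\pi)-1$ by induction. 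The cyclic rank matrix of $P'$ differs from that of $P$ only in a bounded region, so the family of cyclic-interval flacets of $P'$ and their ranks differ from those of $P$ in a controlled way; tracking the corresponding change in $\ec$ via Proposition~\ref{prop:removechange}, one reduces to verifying that $\ec_\SS(P)-\ec_{\SS'}(P')=1$.

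\emph{The main obstacle.} The heart of the argument is this last verification: pinning down exactly how the poset of rank-$<k$ flacets — and with it the values of $c$ on it, hence the M\"obius-theoretic quantities $a_\SS$ and $b_\SS$ — is modified by the elementary move $\pi\rightsquigarrow\pi s$, and confirming that the net effect on the alternating sum $\ec_\SS$ is exactly $1$. A secondary point that also needs care is the identification of the flacets of a connected positroid with cyclic intervals; if one prefers not to appeal to the positroid polytope, this can instead be derived from Proposition~\ref{prop:posdefs}(1) with some bookkeeping about cyclic flats.
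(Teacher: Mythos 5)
Your reductions are sound and overlap with the paper's, but the argument stops short of the actual theorem: you explicitly leave open the identity $\ec_\SS(P)=l(\pi)$, which is the whole content of the statement once the reductions are done, and your sketch of how to close it has a technical problem.

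On the reductions: cutting to connected positroids via Propositions~\ref{prop:ecdirectsum} and~\ref{prop:dimdirectsum} is exactly what the paper does. Passing to cyclic intervals via Theorem~\ref{thm:ecinvariance} and Lemma~\ref{lem:connectedintervals} also matches, though the paper takes $\SS=\II$ to be \emph{all} cyclic intervals (justified by Lemmas~\ref{lem:connectedintervals} and~\ref{lem:intervalcomponents}) rather than flacets only; your further reduction to flacets of rank $<k$ by observing $b=0$ there is correct but unnecessary, and in fact it loses the clean combinatorial structure that the paper exploits.

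The genuine gap is in your final step. You propose induction on $l(\pi)$ via an affine simple transposition $\pi\rightsquigarrow\pi s$, and you candidly flag the verification $\ec_\SS(P)-\ec_{\SS'}(P')=1$ as ``the main obstacle.'' Two issues: first, Proposition~\ref{prop:removechange} compares $\ec_\SS(M)$ and $\ec_{\SS'}(M)$ for a \emph{fixed} matroid $M$ as the set system shrinks; it says nothing about comparing $\ec$ across two different matroids $P$ and $P'$, which is what your induction requires. Second, the flacet poset, the ranks of intervals, and therefore the values $c$, $a_\SS$, $b_\SS$ can all shift when $\pi$ is replaced by $\pi s$, and there is no control offered for how the M\"obius-theoretic recursion reacts; this is not a bookkeeping detail but the crux. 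The paper sidesteps all of this with a direct, non-inductive computation: working over all cyclic intervals $\II$, Lemma~\ref{lem:npositroid} shows that $a_\II([i,j])$ is literally the $0/1$ entry at position $(i,j)$ of the affine permutation matrix, so
\[\ec_\II(P)=\sum_{I\in\II'}(k-\rk I)=\sum_{I\in\II'}\bigl(k-(\#I-d_I)\bigr),\]
where $\II'$ is the set of intervals carrying a $1$; the identities $\sum\#I=nk+n$ and $\sum d_I=l(\pi)+n$ then give $\ec_\II(P)=l(\pi)$ in one line. Your proof would need to either carry out the inductive comparison you describe (a substantial new argument) or adopt something like Lemma~\ref{lem:npositroid} to make the computation direct.
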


In order to prove this, we're going to need to understand a little bit more about the matroid structure of a positroid. We refer repeatedly to restrictions and contractions of positroids; note that it follows directly from the second definition in Proposition \ref{prop:posdefs} that these are again positroids.

\begin{lem}
\label{lem:connectedintervals}
If $P$ is a positroid on $[n]$, $X\subseteq [n]$, and both $P|_X$ and $P/X$ are connected, then $X$ is an interval.
\end{lem}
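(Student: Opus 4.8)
The plan is to argue contrapositively: assuming $X \subseteq [n]$ is \emph{not} an interval, I want to produce a nontrivial direct sum decomposition of either $P|_X$ or $P/X$. Since $X$ is not a cyclic interval, we can partition $X$ into two nonempty pieces $X = Y \sqcup Z$ that are ``cyclically separated,'' meaning there exist $a, b \in [n] \setminus X$ (distinct, possibly with the complement being two arcs) so that $Y$ and $Z$ lie in the two distinct cyclic arcs of $[n] \setminus \{a,b\}$ — more carefully, we can write $[n]$ as a concatenation of cyclic intervals $I_1, J_1, I_2, J_2$ with $Y \subseteq I_1 \cup I_2$... actually the cleanest setup: since $X$ is not an interval, its complement meets $X$ on ``both sides,'' so I can find a cyclic interval $A$ with $\varnothing \neq A \cap X \subsetneq X$ and such that $A$ contains a full ``gap'' of $X$ on each end. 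Let me instead just take the coarsest thing: write $X$'s elements in cyclic order and let the maximal ``runs'' of consecutive-in-$X$ elements be $R_1, \ldots, R_m$ with $m \geq 2$; group them into $Y = R_1$ and $Z = R_2 \cup \cdots \cup R_m$.

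The key computational step is then to use condition (2) of Proposition \ref{prop:posdefs}, i.e. the positivity of the minors, to show that with respect to this partition the relevant rank function is additive. Concretely, after a cyclic relabeling so that $Y$ occupies an initial segment of a cyclic interval and $Z$ a later one, the vectors indexed by $Y$ and by $Z$ sit in ``staircase'' position relative to each other because of total nonnegativity: for a totally nonnegative matrix, the span of an initial block of columns and the span of a later block of columns are in general position in the strongest possible sense compatible with the rank data. I would make this precise by invoking the known fact (which follows from the cyclic-interval rank description, or can be cited from \cite{posschub}/\cite{klspos}) that for a positroid, $\rk(I \cup J) = \rk I + \rk J - \rk(I \cap J)$ whenever $I, J$ are cyclic intervals — the submodularity inequality of Definition \ref{def:rankmat} becomes an equality on cyclic intervals. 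Using this repeatedly, one gets $\rk_P(X) = \sum_i \rk_P(R_i)$, which says $P|_X = \bigoplus_i P|_{R_i}$ is disconnected, contradicting the hypothesis that $P|_X$ is connected — \emph{unless} $m = 1$, i.e. unless $X$ is an interval after all.

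Wait — this only handles $P|_X$; I should double check it doesn't accidentally force $X$ to be an interval whenever $P|_X$ is connected, ignoring the contraction hypothesis. That is actually fine: the statement allows us to conclude $X$ is an interval from connectedness of $P|_X$ alone, and the $P/X$ hypothesis would only be needed if the rank-additivity argument failed. Let me hedge: if the clean ``rank is additive over the runs'' claim for $P|_X$ requires $X$ itself to be spanning in some cyclic interval or some such technical condition, then in the remaining case I would dualize. By the remarks after Definition \ref{def:restrictcontract}, $(P|_X)^* = P^*/([n] \setminus X)$ and $(P/X)^* = P^*|_{[n]\setminus X}$, and $P^*$ is again a positroid, and $X$ is an interval iff $[n] \setminus X$ is a cyclic interval. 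So running the same argument on $P^*$ with the complement of $X$ in the second configuration converts a disconnection of $P/X$ into the needed contradiction.

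The main obstacle I anticipate is the ``staircase / general position'' claim — proving that for a totally nonnegative configuration the blocks indexed by the cyclic runs $R_i$ of $X$ interact with rank exactly as the cyclic-interval rank matrix predicts, with no extra ``global'' coincidences of the Pappus type. I expect this is exactly where positroidness (as opposed to general matroids) is doing the work, and where I would lean hardest on the cited structure theory: either the description of positroids via affine permutations in \cite{klspos} (reading off $\rk_P(X)$ for arbitrary $X$ from the number of ``cyclically consecutive'' quadruples, as in Definition \ref{def:posaffine}, and checking this count splits over the runs), or the total-positivity characterization directly. Everything else — the combinatorial bookkeeping of runs and gaps, the cyclic relabeling, the dualization — is routine once that input is in hand.
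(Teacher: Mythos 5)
Your central technical claim is false. You assert that for a positroid, $\rk(I\cup J)=\rk I+\rk J-\rk(I\cap J)$ whenever $I,J$ are cyclic intervals. Take $P=U_{2,4}$ (a positroid), $I=\{1,2\}$, $J=\{2,3,4\}$: then $\rk I+\rk J-\rk(I\cap J)=2+2-1=3$ but $\rk(I\cup J)=2$. Disjoint intervals fail even more dramatically ($I=\{1,2\}$, $J=\{3,4\}$ gives $4\ne 2$). So the ``submodularity becomes equality on cyclic intervals'' input simply isn't there, and even if it were, the intermediate unions $R_1\cup R_2$, etc.\ in your iteration are not cyclic intervals, so the identity wouldn't chain.

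Your hedge also contains an outright error: you claim ``the statement allows us to conclude $X$ is an interval from connectedness of $P|_X$ alone.'' It does not, and this is false as a mathematical statement. Take $P=U_{1,4}$ on $[4]$ (rank $1$, no loops --- a positroid), and $X=\{1,3\}$. Then $P|_X\cong U_{1,2}$ is connected, but $X$ is not an interval; the lemma is saved only because $P/X$ (two loops) is disconnected. So you genuinely need both hypotheses, and the conclusion you want is a disjunction --- ``$P|_X$ or $P/X$ is disconnected'' --- with no algorithm in your writeup for deciding which branch to pursue. The fallback ``dualize in the remaining case'' is not a proof until you say precisely what ``the remaining case'' is and why exactly one of the two arguments succeeds.

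The paper's argument is different in kind and worth contrasting: it uses connectedness of $P/X$ to find a circuit through two cyclically separated points $c_1,c_2$ of $[n]-X$, and connectedness of $(P|_X)^*$ to find a circuit through two interleaving points $b_1,b_2$ of $X$; then it restricts and contracts to reduce to a $4$-element matroid on $\{b_1,c_1,b_2,c_2\}$ in which $\{b_1,b_2\}$ and $\{c_1,c_2\}$ each have rank $1$, and checks directly that this is not a positroid (the required sign pattern for the $2\times 2$ minors is inconsistent). That argument genuinely uses both connectedness hypotheses simultaneously rather than trying to prove either restriction or contraction is forced to be disconnected on its own, which --- as the $U_{1,4}$ example shows --- cannot be done.
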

\begin{proof}
Suppose $X$ is not an interval. Take $c_1,c_2\in[n]-X$ to lie in two different cyclic intervals of $[n]-X$. Since $P/X$ is connected, there is a circuit $C$ of $P/X$ which contains both $c_1$ and $c_2$. By restricting to $X\cup C$, we may assume that $P/X$ is a circuit. Similarly, for $b_1,b_2\in X$ lying on different sides of $c_1$ and $c_2$ (so the named elements appear in the cyclic order $b_1,c_1,b_2,c_2$), there is a circuit $B$ of $(P|_X)^*=P/([n]-X)$ containing both, and we may contract the elements of $X-B$ and assume that $(P|_X)^*$ is a circuit, that is, that everything in $X$ is parallel.

Now, delete all elements of $X$ other than $b_1$ and $b_2$. This doesn't change the rank of any set in $P/X$: everything in $X$ was parallel to $b_1$ and $b_2$. Dually, contract all elements of $[n]-X$ except $c_1$ and $c_2$. Now we have $n=4$, and the sets $\{1,3\}$ and $\{2,4\}$ each have rank 1. This matroid is not a positroid, which can easily be checked, so we have a contradiction.
\end{proof}

\begin{lem}
\label{lem:noncrossing}
The connected components of a positroid form a non-crossing partition. (This was also proved independently in \cite{noncross}.)
\end{lem}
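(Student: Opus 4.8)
The plan is to show that if $\{A_1,\dots,A_m\}$ are the connected components of a positroid $P$ on $[n]$, then no two of them ``cross'' in the cyclic sense — that is, we cannot find $a < b < c < d$ in cyclic order with $a,c$ in one component and $b,d$ in another. The key tool is Lemma \ref{lem:connectedintervals} together with the fact, noted after Definition \ref{def:restrictcontract}, that restriction and contraction are dual operations and (by the remark after Definition \ref{def:positroid}) both preserve positroids. The idea is that each connected component $A_i$, being a connected component of $P$, is ``as connected as possible,'' and in particular one should be able to realize it as a flacet-like piece to which Lemma \ref{lem:connectedintervals} applies.

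First I would reduce to the case of two components. Suppose $A_i$ and $A_j$ cross. Since $P = \bigoplus_\ell A_\ell$, the restriction $P|_{A_i \cup A_j}$ is just $A_i \oplus A_j$, and $A_i, A_j$ are still its connected components, and it is still a positroid — but now on a possibly smaller cyclically ordered set. The subtlety is that deleting the other components changes which elements are cyclically adjacent, so I must be careful that the crossing of $A_i$ and $A_j$ in $[n]$ survives; it does, because deleting elements only removes gaps between the four witnessing elements $a,b,c,d$ and cannot uncross them. So from now on $P = A \oplus B$ with $A, B$ connected and crossing.

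Next I want to contradict Lemma \ref{lem:connectedintervals}. The natural candidate set is $X = A$: I claim both $P|_A$ and $P/A$ are connected. Indeed $P|_A = A$ is connected by assumption. For the contraction, $P/A$: since $P = A \oplus B$, contracting $A$ kills the rank contribution of $A$ entirely, so $P/A \cong B$, which is connected by assumption. (Here I'm using $\rk_{P/A}(S) = \rk_P(S \cup A) - \rk_P(A) = \rk_B(S) + \rk_A(A) - \rk_A(A) = \rk_B(S)$ for $S \subseteq B$.) So both $P|_A$ and $P/A$ are connected, and Lemma \ref{lem:connectedintervals} forces $A$ to be a cyclic interval of $[n]$. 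But if $A$ is an interval, then $B = [n] \setminus A$ is also an interval, and two complementary intervals cannot cross — contradiction.

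The main obstacle I anticipate is the reduction step: making sure that restricting to $A_i \cup A_j$ genuinely preserves the crossing and that the resulting object is still a positroid on its (relabeled, cyclically ordered) ground set, since positroid-hood is tied to the cyclic order. Once that bookkeeping is pinned down, the rest is a short application of Lemma \ref{lem:connectedintervals}. I should also double-check the edge case where one component is a set of loops or coloops, but in those cases the component sits inside every flat or in every basis and the crossing condition degenerates trivially.
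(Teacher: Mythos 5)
Your proof is correct and follows essentially the same path as the paper's: reduce to two components by restricting to $A_i\cup A_j$ (still a positroid), then observe that $P|_A=A$ and $P/A=B$ are both connected so Lemma~\ref{lem:connectedintervals} forces $A$ to be a cyclic interval, and complementary intervals cannot cross. The paper compresses the reduction into the remark that ``each of $C$ and $D$ must be an interval inside $C\cup D$,'' while you spell out the bookkeeping about why restriction preserves both positroid-hood and the crossing pattern, but the underlying argument is identical.
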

\begin{proof}
Suppose first that $P$ has just two connected components, say $P=A\oplus B$. Then $P/A=B$ and $P/B=A$ are also both connected, so Lemma \ref{lem:connectedintervals} implies that they are both intervals. If there are more than two connected components, they no longer have to both be intervals, but for any two components $C$ and $D$, each of $C$ and $D$ must be an interval inside $C\cup D$, which means in particular that they cannot cross.
\end{proof}

\begin{lem}
\label{lem:intervalcomponents}
If $P$ is a connected positroid on $[n]$ and $I\subseteq [n]$ is an interval, then each connected component of $I$ is an interval.
\end{lem}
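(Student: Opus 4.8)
The plan is to argue by contradiction, using Lemmas~\ref{lem:noncrossing} and~\ref{lem:connectedintervals}. Suppose some connected component $C$ of $P|_I$ is not an interval. Then $C\subsetneq I$ (since $I$ itself is an interval), so $P|_I$ is disconnected, hence $P|_I\ne P$ and $I\ne[n]$; write $J=[n]\setminus I$, a nonempty interval. List the elements of $C$ in the cyclic order of $I$. Since $C$ is not an interval there are two elements $p,q\in C$, consecutive among the elements of $C$, so that the set $G$ of elements of $I$ lying strictly between $p$ and $q$ is nonempty. By Lemma~\ref{lem:noncrossing} the components of $P|_I$ form a non-crossing partition of $I$, so any component other than $C$ that meets $G$ must be contained in $G$ (otherwise, since $C$ contains both $p$ and $q$, it would cross $C$). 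Hence $G$ is a union of components of $P|_I$ and $P|_{C\cup G}=P|_C\oplus P|_G$; moreover $J$, being disjoint from $I\supseteq G$, lies on the complementary arc from $q$ to $p$, so for any $g\in G$ and $d\in J$ the four elements $p,g,q,d$ occur in this cyclic order.

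Next I would manufacture two crossing parallel pairs inside a minor of $P$. Since $P|_C$ is connected, $p$ and $q$ lie on a common circuit $Z$ of $P|_C$; contracting $Z\setminus\{p,q\}$ makes $p$ and $q$ parallel, and the splitting $P|_{C\cup G}=P|_C\oplus P|_G$ guarantees that $g\notin\overline{C}$, so this contraction does not also force $g$ into the closure of $p$. Since $P$ is connected, $g$ and $d$ lie on a common circuit $W$ of $P$; contracting $W\setminus\{g,d\}$ makes $g$ and $d$ parallel. Performing both contractions and then restricting to $\{p,q,g,d\}$, the goal is to land on the four-element matroid in which $\{p,q\}$ and $\{g,d\}$ are the two rank-$1$ pairs. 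But that matroid has two crossing parallel pairs, because $p,g,q,d$ are cyclically interleaved, and such a matroid is not a positroid --- this is exactly the obstruction used to finish the proof of Lemma~\ref{lem:connectedintervals}, where the matroid on $[4]$ with $\{1,3\}$ and $\{2,4\}$ of rank $1$ is shown not to be a positroid. Since every restriction or contraction of a positroid is again a positroid, this is a contradiction, so every component of $P|_I$ must be an interval.

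The step I expect to be the main obstacle is making the second paragraph precise. The two contractions $Z\setminus\{p,q\}$ and $W\setminus\{g,d\}$ can interact --- $W$ might run through $C$ or through $Z$ --- so carrying them out simultaneously could collapse ranks in unintended ways or even remove $p$, $q$, $g$, or $d$ from the ground set. One must therefore choose $Z$ and $W$ (or, equivalently, sequence the deletions and contractions in the style of the reduction in the proof of Lemma~\ref{lem:connectedintervals}) so that in the resulting minor one genuinely has $\rk\{p,q\}=\rk\{g,d\}=1$ with $p,q,g,d$ distinct and still cyclically interleaved. Verifying this comes down to a rank computation using submodularity of the rank function together with the splitting $P|_{C\cup G}=P|_C\oplus P|_G$, and it is this bookkeeping, rather than the overall strategy, where the real work lies.
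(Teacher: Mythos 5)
Your first paragraph is sound and closely parallels the paper's own setup: the paper likewise applies Lemma~\ref{lem:noncrossing} to $P|_I$ to arrange that the gaps of the non-interval component lie strictly between its intervals. The trouble is the second paragraph, and the difficulty you flag at the end is not bookkeeping --- it is the missing half of the proof. The reason the analogous reduction in Lemma~\ref{lem:connectedintervals} goes through is that the two circuits used there live on \emph{disjoint} ground sets: one is a circuit of the contraction by $X$ (supported in $[n]-X$) and the other a circuit of $(P|_X)^*$ (supported in $X$), so the deletions and contractions they prescribe cannot interfere. Your $W$ is a circuit of $P$ itself and can pass through $C$, through $Z$, even through $p$ or $q$; contracting $W\setminus\{g,d\}$ can then remove $p$ or $q$ from the ground set, or leave $d$ parallel to $p$ (nothing forbids $d\in J$ from being parallel to $p$ in $P$, and parallelism survives contraction unless an element becomes a loop), in which case the four-element minor you land on has $\{p,q,d\}$ of rank one and hence is forced to total rank $\le 1$ or to contain a loop --- and those four-element matroids \emph{are} positroids, so no contradiction results. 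Choosing $g$, $d$, $Z$, $W$ more carefully does not obviously repair this.

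What the crossing argument actually requires is a circuit of the contraction $P/C$ containing a point of $I\setminus C$ and a point of $[n]\setminus I$; such a circuit is supported in $[n]\setminus C$ and therefore composes cleanly with the cocircuit of $P|_C$ through $p$ and $q$, exactly as in Lemma~\ref{lem:connectedintervals}. But connectedness of $P$ does \emph{not} guarantee that $P/C$ contains such a circuit, and this is precisely where the paper's proof forks into two cases: if no circuit of $P/C$ meets both $I\setminus C$ and $[n]\setminus I$, then $P/C$ splits as $(I/C)\oplus\bigl(((P-I)\cup C)/C\bigr)$, and since $C$ is a connected component of $P|_I$ a rank count upgrades this to a direct-sum decomposition of $P$ itself, contradicting the connectedness of $P$ by an entirely different mechanism than the four-element non-positroid. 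Your proposal contains no trace of this second case, and without it the argument cannot be closed.
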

\begin{proof}
Say $I=X\oplus\bigoplus_iY_i$ with $X$ and each $Y_i$ connected, and suppose $X$ is not an interval, say $X=\bigcup_kJ_k$ and $I-X=\bigcup_lJ'_l$ where each $J_k$ and $J'_l$ is an interval. Since the components of $I$ have to form a non-crossing partition by Lemma \ref{lem:noncrossing}, none of the $Y_i$'s can meet more than one of the $J'_l$'s. So we may assume that left and right endpoints of $X$ coincide with those of $I$ by removing the $Y_i$'s that lie to the left of $X$'s left endpoint or to the right of its right endpoint. We now know that all the $J'_l$'s lie in between two $J_k$'s.

Just as in the proof of Lemma \ref{lem:connectedintervals}, the connectedness of $X$ lets us conclude that there is a circuit of $X^*=P/([n]-X)$ that contains points in two different $J_k$'s. Suppose there is a circuit of $P/X$ that contains a point of $I-X$ and a point of $P-I$. If this were the case, because we forced all the points of $I-X$ to lie between intervals of $X$, we would be in the exact situation that gave us a contradiction in the previous proof.

So there must be no such circuit. But this means that \[P/X=I/X\oplus((P-I)\cup X)/X=\left(\bigoplus_iY_i\right)\oplus(P-I\cup X/X),\] which means that \[\rk P-\rk X=\sum_i\rk Y_i+\rk(P-I\cup X)-\rk X,\] so in fact \[P=\left(\bigoplus_iY_i\right)\oplus((P-I)\cup X),\] contradicting the connectedness of $P$.
\end{proof}

\begin{lem}
\label{lem:npositroid}
For a positroid $P$, let $\II\subseteq\mathcal{P}([n])$ be the collection of all cyclic intervals. For any interval $[i,j]\ne[n]$, $a_\II([i,j])$ is equal to the entry (either 0 or 1) at $(i,j)$ in the affine permutation matrix.
\end{lem}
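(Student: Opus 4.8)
The plan is to verify the recursive characterization of $a_\II$ directly by induction on the size of the interval $[i,j]$, comparing it against the combinatorial recursion satisfied by the affine permutation matrix entries. Write $\pi$ for the affine permutation of $P$ and let $e_{ij}$ denote the entry at position $(i,j)$ of its matrix, so $e_{ij}=1$ precisely when $r_{ij}=r_{i,j-1}=r_{i+1,j}\ne r_{i+1,j-1}$. Recall $a_\II([i,j])=c([i,j])-\sum a_\II(T)$ where the sum runs over intervals $T\subsetneq[i,j]$; the only intervals strictly contained in $[i,j]$ are $[i',j']$ with $[i',j']\subseteq[i,j]$ and $(i',j')\ne(i,j)$. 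Combining this with the recursion for the one-element intervals (where $c(\{i\})=1-r_{ii}$ and there are no proper sub-intervals, so $a_\II(\{i\})=1-r_{ii}$, matching the description of the diagonal entries of $\pi$), the statement will follow once I show that the ``partial sums'' of $a_\II$ over sub-intervals reconstruct $c$ correctly — that is, $\sum_{[i',j']\subseteq[i,j]}a_\II([i',j']) = c([i,j]) = \#[i,j]-r_{ij}$.

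Concretely, I would prove the stronger identity
\[
\sum_{[i',j']\subseteq[i,j]} e_{i'j'} \;=\; \#[i,j]-r_{ij}
\]
for every interval $[i,j]\ne[n]$, and then observe that this is exactly the inclusion–exclusion relation defining $a_\II$, forcing $a_\II([i,j])=e_{ij}$ by induction. But this identity is precisely the statement already recorded in Definition \ref{def:posaffine}: the rank of $[i,j]$ equals $\#[i,j]-d$ where $d$ is the number of $1$'s of the affine permutation matrix in positions $(k,l)$ with $[k,l]\subseteq[i,j]$. So the real content is bookkeeping: check that ``positions weakly southwest of $(i,j)$'' in the sense used there coincides with ``$[i',j']\subseteq[i,j]$'' in the cyclic-interval sense here, taking care that the convention excluding the edge $j-i=n$ and the hypothesis $[i,j]\ne[n]$ are compatible, and that the base case of the induction (singletons) is handled by the first bullet of the list after Definition \ref{def:posrankmat}.

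For the induction itself: assuming $a_\II([i',j']) = e_{i'j'}$ for all proper sub-intervals, we get
\[
a_\II([i,j]) = c([i,j]) - \sum_{[i',j']\subsetneq[i,j]} e_{i'j'} = \bigl(\#[i,j]-r_{ij}\bigr) - \Bigl(\sum_{[i',j']\subseteq[i,j]} e_{i'j'} - e_{ij}\Bigr) = e_{ij},
\]
using the boxed rank identity. The main obstacle I anticipate is purely notational rather than mathematical: matching the ``southwest of $(i,j)$'' region in the affine-permutation matrix picture (whose rows and columns are indexed by $\Z$, reading off intervals) against the collection $\{[i',j'] : [i',j']\subseteq[i,j]\}$ of cyclic subintervals, and confirming that no subinterval of a non-full interval $[i,j]$ is itself the full set $[n]$ — so that the excluded-edge convention never interferes with the sum. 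Once that correspondence is pinned down, everything reduces to the rank formula already established in \cite{klspos} and quoted above.
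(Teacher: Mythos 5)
Your proposal is correct and is essentially the same argument the paper gives, just written out as an explicit induction on interval length rather than invoking uniqueness of the recursive/M\"obius inversion directly. The paper's proof simply observes that the purported values (the affine permutation matrix entries $e_{i'j'}$) satisfy $c([i,j])=\sum_{[i',j']\subseteq[i,j]}e_{i'j'}$ by the rank formula from Definition \ref{def:posaffine}, and this relation uniquely determines $a_\II$ --- which is exactly what your induction establishes. The only minor slip is your reference to ``the convention excluding the edge $j-i=n$'': that convention in the paper concerns which positions are declared \emph{essential}, not which positions may carry a $1$ in the affine permutation matrix, so it is not actually the relevant technicality; what matters for the lemma's restriction $[i,j]\ne[n]$ is simply that the recursion and the rank formula both stay within the set of proper cyclic subintervals, which your argument handles correctly anyway.
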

\begin{proof}
To see this, it's enough to note that our purported $a_\II$ satisfies the relation \[c(S)=\sum_{T\subseteq S}a_\II(T).\] We mentioned above that $c([i,j])=\#[i,j]-\rk[i,j]$ is the number of intervals $[k,l]\subseteq[i,j]$ with a 1 in the affine permutation matrix at position $(k,l)$, so this is true.
\end{proof}

\begin{proof}[Proof of Theorem \ref{thm:posexpcodim}]
First, we claim that for a positroid $P$, if $\II\subseteq\mathcal{P}([n])$ is the collection of all cyclic intervals of $[n]$, $\ec(P)=\ec_\II(P)$. For connected positroids, this follows immediately from Theorem \ref{thm:ecinvariance}: Lemmas \ref{lem:connectedintervals} and \ref{lem:intervalcomponents} say that taking $\SS=\II$ satisfies the hypotheses of the theorem. For a general positroid, we can decompose it as a direct sum and apply Proposition \ref{prop:ecdirectsum}.

So now it remains to show that $\ec_\II(P)$ is the actual codimension of $P$. We'll use the fact that $\codim P=l(\pi)$, where $\pi$ is the corresponding affine permutation. Let $d_I$ be the number of intervals $[k,l]\subseteq I$ with a 1 in position $(k,l)$, and let $\II'$ be the collection of intervals other than $[n]$ with a 1 at the corresponding position. We can ignore $[n]$ itself because $\codim[n]=0$, so we may compute, using Lemma \ref{lem:npositroid}:
\[\ec_\II(P)=\sum_{I\in\II}a_\II(I)\codim(I)=\sum_{I\in\II'}\codim(I)=\sum_{I\in\II'}(k-(\#I-d_I)),\] where $k$ is the rank of $P$. A simple computation (which is spelled out in \cite{klspos}) shows that $\sum\#I=nk+n$, and we know that $\sum d_I=l(\pi)+n$, since we're counting the pairs of intervals in the definition of $l(\pi)$ and also counting the pairs $(I,I)$. So we're left with \[\ec_\II(P)=nk-(nk+n)+l(\pi)+n=l(\pi).\]
\end{proof}

\section{Valuativity}
\label{sec:valuativity}
Let $M$ be a matroid on a set $[n]$. For each basis $B$ of $M$, consider the vectors in $\R^n$ whose entries are 1 if the corresponding element of $[n]$ is in $B$ and 0 otherwise. The convex hull of these vectors is called the \defof{matroid polytope} of $M$, written $P(M)$. There are many examples of combinatorial properties of matroids that are encoded in the geometry of the matroid polytope.

\begin{defn}
Given a matroid $M$, a \defof{matroidal subdivision} of $P(M)$ is a decomposition of $P(M)$ into polytopes which are all matroid polytopes. If $\mathcal{D}$ is a matroidal subdivision of $P(M)$, write $\mathcal{D}_{in}$ for the internal faces of $\mathcal{D}$, that is, the faces of $\mathcal{D}$ that are not also faces of $P(M)$. Given a function $f$ from the set $\mathrm{Mat}(n)$ of all matroids on $[n]$ into an abelian group, we say $f$ is \defof{valuative} if, for any matroid $M$ and any matroidal subdivision $\mathcal{D}$ of $P(M)$, \[f(M)=\sum_{P(N)\in\mathcal{D}_{in}}(-1)^{\dim P(M)-\dim P(N)}f(N).\]
\end{defn}

Valuative matroid invariants are studied in detail in \cite{valuative}. We single out the following result, which appears as \cite[5.4]{valuative} in slightly different language:

\begin{thm}
\label{thm:valschub}
The set of Schubert matroids forms a basis for $\mathrm{Mat}(n)$ modulo matroidal subdivisions.
\end{thm}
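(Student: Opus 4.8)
The plan is to unwind the phrase ``basis modulo matroidal subdivisions'' and then handle spanning and independence separately. A valuative invariant $f\colon\mathrm{Mat}(n)\to A$ extends uniquely to a group homomorphism $\Z[\mathrm{Mat}(n)]\to A$ annihilating every element $[M]-\sum_{P(N)\in\mathcal D_{in}}(-1)^{\dim P(M)-\dim P(N)}[N]$, so valuative invariants are exactly the homomorphisms out of the quotient $P_{\mathrm{val}}(n)$ of $\Z[\mathrm{Mat}(n)]$ by the subgroup generated by these elements; the theorem says the classes $[M]$ of Schubert matroids form a $\Z$-basis of $P_{\mathrm{val}}(n)$. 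A matroidal subdivision of $P(M)$ for $M$ of rank $k$ involves only rank-$k$ matroids (all polytopes lie in $\sum x_i=k$), so $P_{\mathrm{val}}(n)=\bigoplus_{k=0}^nP_{\mathrm{val}}(k,n)$ and I would argue one rank at a time. The rank-$k$ Schubert matroids on $[n]$ are indexed by $\binom{[n]}k$: for $S=\{s_1<\dots<s_k\}$ the matroid $M_S$ has as bases exactly the $B=\{b_1<\dots<b_k\}$ with $b_i\ge s_i$ for all $i$, i.e.\ with $S\preceq B$ in the Gale order. There are $\binom nk$ of these, and I must show they span and are independent in $P_{\mathrm{val}}(k,n)$.

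Independence is the easier half. For $B\in\binom{[n]}k$ put $\epsilon_B(M)=1$ if $B$ is a basis of $M$ and $0$ otherwise. Since $\mathbf 1_B$ is a vertex of the $0/1$-cube it lies in $P(M)$ iff $B$ is a basis, so $\epsilon_B(M)=\mathbf 1_{P(M)}(\mathbf 1_B)$ is the value at the fixed point $\mathbf 1_B$ of the indicator function of $P(M)$; the map $P\mapsto\mathbf 1_P$ obeys inclusion--exclusion, and the signed Euler-characteristic identity $\mathbf 1_P(x)=\sum_{F\in\mathcal D_{in}}(-1)^{\dim P-\dim F}\mathbf 1_F(x)$ for a polyhedral subdivision $\mathcal D$ is precisely the relation defining a valuative invariant, so each $\epsilon_B$ is valuative. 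Now $\epsilon_B(M_T)=1$ iff $T\preceq B$, so the homomorphism $\epsilon=(\epsilon_B)_B\colon P_{\mathrm{val}}(k,n)\to\Z^{\binom{[n]}k}$ sends $[M_T]$ to the $T$-th row of the incidence (``zeta'') matrix of the Gale order. That matrix is unitriangular with respect to any linear extension of $\preceq$, hence invertible over $\Z$; therefore the $[M_T]$ are linearly independent and $\epsilon$ is surjective.

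For spanning I must show every $[M]\in P_{\mathrm{val}}(k,n)$ is a $\Z$-combination of Schubert classes; equivalently, since the $\epsilon([M_T])$ already form a $\Z$-basis of $\Z^{\binom{[n]}k}$, that $\epsilon$ is injective, i.e.\ $\mathrm{rank}\,P_{\mathrm{val}}(k,n)=\binom nk$. Recall (Gale) that with the order $1<\dots<n$ fixed the greedily selected basis $B_0(M)$ satisfies $B_0(M)\preceq B$ for every basis $B$ of $M$, so $\mathscr B(M)\subseteq\mathscr B(M_{B_0(M)})$ with equality iff $M$ is Schubert. The approach is to induct on $M$, using matroidal subdivisions to rewrite $[M]$ as a combination of classes of matroids that are ``closer to Schubert'': when $P(M)$ is split by a hyperplane $\sum_{i\in A}x_i=c$ into two full-dimensional matroid polytopes $P(M_1),P(M_2)$ meeting in a matroid polytope $P(M_3)$, the subdivision relation gives $[M]=[M_1]+[M_2]-[M_3]$ with each $M_j$ having strictly fewer bases, and one chooses the split from a ``Gale-feasible non-basis'' of $M$ (a set $B'$ with $B_0(M)\preceq B'\notin\mathscr B(M)$) and iterates. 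Combined with the independence half this makes the $[M_T]$ a basis, and as a byproduct $\epsilon$ is an isomorphism, so the $\epsilon_B$ are a basis of the valuative invariants.

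The main obstacle is precisely this spanning step. A naive ``always find a splitting hyperplane'' recursion fails, since there exist non-Schubert matroids whose polytope admits no nontrivial matroidal subdivision at all; one genuinely needs the more flexible machinery of \cite{valuative} --- regular subdivisions induced by weight vectors, together with a carefully chosen well-founded complexity measure that strictly decreases on every internal face --- to run the induction and obtain the dimension bound $\mathrm{rank}\,P_{\mathrm{val}}(k,n)\le\binom nk$. I would follow that treatment for this part; the independence half, by contrast, is essentially formal once the indicator-evaluation valuations $\epsilon_B$ are in hand.
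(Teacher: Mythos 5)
The paper does not actually prove this statement: it is imported verbatim from \cite[5.4]{valuative} (Derksen--Fink), so there is no internal proof to compare against. Measured as a standalone argument, your proposal is half a proof. The independence half is correct and complete: the evaluations $\epsilon_B(M)=\mathbf 1_{P(M)}(\mathbf 1_B)$ are valuative by exactly the Euler-characteristic identity the paper records as Lemma \ref{lem:euler}, $\epsilon_B(M_T)=1$ iff $T\preceq B$ in Gale order, and unitriangularity of the zeta matrix of a poset gives $\Z$-linear independence of the Schubert classes together with surjectivity of $\epsilon=(\epsilon_B)$. This is essentially the standard argument, and your reduction of spanning to injectivity of $\epsilon$ (given that $\epsilon$ already hits all of $\Z^{\binom{[n]}{k}}$ on the Schubert span) is also correct. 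One small caution: the parenthetical ``i.e.\ $\mathrm{rank}\,P_{\mathrm{val}}(k,n)=\binom nk$'' is weaker than what you need --- a rank count alone would not rule out torsion or a proper finite-index sublattice --- so you should stick with the injectivity formulation.

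The genuine gap is the spanning half, and you have named it yourself: the entire content of the theorem beyond formalities is the upper bound on $P_{\mathrm{val}}(k,n)$, and your proposal does not supply it. The ``split by a hyperplane and induct on the number of bases'' recursion cannot be completed as stated, because not every non-Schubert matroid polytope admits a nontrivial matroidal subdivision, and even when splits exist there is no obvious well-founded measure guaranteeing the pieces are ``closer to Schubert.'' Deferring this step to \cite{valuative} means the proposal proves the theorem only modulo the theorem's hardest part. That said, since the paper itself treats the whole statement as a citation, your write-up is a net gain: it isolates precisely which half is formal (independence, which you prove) and which half genuinely requires the machinery of \cite{valuative} (the dimension bound). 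If you want a self-contained proof, the piece to supply is Derksen--Fink's spanning argument or an equivalent one.
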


We will show:

\begin{thm}
\label{thm:expval}
Expected codimension is a valuative matroid invariant.
\end{thm}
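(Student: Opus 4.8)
The plan is to write $\ec$ as a group homomorphism applied to a known valuative invariant, so that valuativity comes for free. I would begin with the formula
\[\ec(M)=\sum_{S,T\subseteq[n]}c(T)\,(k-\rk S)\,\mu(T,S)\]
established at the start of Section 3, in which $\mu=\mu_{\mathcal{P}([n])}$ is the Möbius function of the Boolean lattice --- which does not depend on $M$ --- and $k=\rk_M([n])$. Since $\mu(T,S)=0$ unless $T\subseteq S$, every nonzero term involves only the two sets of a chain $T\subseteq S\subseteq[n]$; and, expanding $c(T)=\#T-\rk_M(T)$, such a term is $(-1)^{\#S-\#T}$ times a polynomial of degree at most $2$ in the three numbers $\rk_M(T)$, $\rk_M(S)$, $\rk_M([n])$. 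Thus $\ec$ is a polynomial of degree $\le 2$ in the rank values, in which --- and this is the crucial structural point --- every quadratic monomial $\rk_M(T)\,\rk_M(S)$ that appears has $T\subseteq S$.

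For each fixed $S$, the map $M\mapsto\rk_M(S)=\max_{x\in P(M)}\sum_{i\in S}x_i$ is valuative: it is the value at $\mathbf 1_S$ of the support function of $P(M)$, and the support function is a polytope valuation (concretely, $h_{P\cup Q}(\mathbf 1_S)+h_{P\cap Q}(\mathbf 1_S)=h_P(\mathbf 1_S)+h_Q(\mathbf 1_S)$ whenever $P\cup Q$ is convex, because then $h_{P\cap Q}(\mathbf 1_S)=\min(h_P(\mathbf 1_S),h_Q(\mathbf 1_S))$, so the inclusion--exclusion identity over a matroidal subdivision applies). Constants are valuative, and $M\mapsto\rk_M([n])=\rk M$ is even constant on every matroidal subdivision. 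If $\ec$ were linear in the rank values we would be done; the obstruction is the degree-two part, since a product of valuative invariants need not be valuative --- indeed $M\mapsto\rk_M(T)\,\rk_M(S)$ fails to be valuative for general $T,S$. The chain condition above is exactly what lets us get around this.

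To use it, I would regroup the double sum over nested pairs $T\subseteq S\subseteq[n]$ into a sum over maximal flags of subsets of $[n]$: a nested pair $(T,S)$ with $\#T=a\le\#S=b$ lies in exactly $a!\,(b-a)!\,(n-b)!$ maximal flags $\varnothing=F_0\subset F_1\subset\cdots\subset F_n=[n]$, and each maximal flag contains exactly one such pair, namely $(F_a,F_b)$. Carrying this out turns the formula into
\[\ec(M)=\sum_{F_0\subset F_1\subset\cdots\subset F_n}\Psi\bigl(\rk_M F_0,\rk_M F_1,\ldots,\rk_M F_n\bigr),\]
summed over all maximal flags, where $\Psi$ is a single explicit function of the rank sequence $(\rk_M F_0,\ldots,\rk_M F_n)$. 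The chain condition makes this possible: each term of $\ec$ involves rank values only along a chain, so it fits inside a maximal flag; a term mixing ranks of incomparable sets could not be reorganized this way. Now the invariant $M\mapsto\sum_{F_\bullet}[\,\text{rank sequence of }F_\bullet\,]$, valued in the free abelian group on rank sequences of length $n+1$, is valuative --- it is one of the basic valuative invariants of \cite{valuative} (essentially the universal one). Since $\ec$ is the group homomorphism $[\mathbf s]\mapsto\Psi(\mathbf s)$ composed with this invariant, and a homomorphism applied to a valuative invariant is valuative, $\ec$ is valuative. One could instead finish with Theorem \ref{thm:valschub}, since these flag statistics span the relevant space of valuative functions, but the argument above does not need it.

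The step I expect to be the main obstacle is the recognition in the second paragraph: that the degree-two part of $\ec$ cannot be dispatched by ``$\ec$ is a polynomial in valuative invariants,'' and that what rescues the argument is the purely combinatorial observation --- immediate from the Möbius formula --- that each quadratic term of $\ec$ pairs a set with one of its own subsets. After that, the remaining work is bookkeeping: the flag-counting multiplicities, and checking that the function $\Psi$ is well defined (independent of which maximal flag refines a given chain, which holds because these multiplicities depend only on $a$ and $b$).
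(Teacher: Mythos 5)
Your proof is correct in substance and takes a genuinely different route from the paper's. The paper introduces an auxiliary three-variable generating function $s_M(x,y,z)=\sum_{S\subseteq T}x^{\#S-\rk S}y^{\rk M-\rk T}z^{\#T-\#S}$, proves it valuative directly --- for each nested pair the rank constraints cut $P(M)$ by half-spaces, and then Lemma~\ref{lem:euler} (a matroid-subdivision Euler-characteristic argument borrowed from Speyer's proof that the Tutte polynomial is valuative) finishes it --- and finally extracts $\ec$ by differentiating: $\ec(M)=\partial_x\partial_y s_M(0,0,-1)$. You instead start from the M\"obius formula for $\ec$, observe (correctly) that every quadratic monomial in rank values pairs a set with one of its own subsets, reorganize the sum over nested pairs into a sum over maximal flags of a function $\Psi$ of the flag's rank sequence, and then appeal to Derksen--Fink's $\mathcal{G}$-invariant (the universal valuative invariant, which is exactly $M\mapsto\sum_{F_\bullet}[\text{rank sequence}]$) to conclude. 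The two approaches share the decisive structural observation --- $\ec$ involves rank values only along chains in $\mathcal{P}([n])$ --- which is precisely what the paper encodes by indexing $s_M$ over nested pairs, and what makes your flag reorganization possible. The trade-off is clear: your argument is shorter and more modular but rests on the full strength of the Derksen--Fink universality theorem as a black box, while the paper's is longer but self-contained, needing only the elementary Euler-characteristic fact. (Your motivating paragraph on support functions as valuations is not needed for the final argument and is a little loose --- two-polytope inclusion-exclusion for $h_P$ does not directly give the alternating sum over internal faces of a matroidal subdivision that the paper's definition requires --- but since you ultimately cite Derksen--Fink rather than relying on that paragraph, it doesn't affect correctness.)
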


Since Schubert matroids, being positroids, have expected codimension, Theorem \ref{thm:valschub} gives us another way to think about expected codimension: you could have defined it by assigning Schubert matroids their codimensions and extending to all matroids by subdividing the matroid polytope and insisting that it be valuative.

We'll prove Theorem \ref{thm:expval} by proving something stronger first:

\begin{lem}
\label{lem:tuttelike}
Let $M$ be a matroid on $[n]$, and define \[s_M(x,y,z)=\sum_{S\subseteq T\subseteq [n]}x^{\#S-\rk S}y^{\rk M-\rk T}z^{\#T-\#S}.\] Then the function that takes $M$ to $s_M$ is valuative.
\end{lem}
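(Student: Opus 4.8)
The statement asserts that $M \mapsto s_M$ is valuative, where $s_M$ is a three-variable generating function summed over nested pairs $S \subseteq T \subseteq [n]$. The key observation is that valuativity is a linear condition, so it suffices to check it coefficient-by-coefficient in $x, y, z$. Expanding, $s_M$ is a polynomial whose coefficient of $x^i y^j z^\ell$ counts (with sign-free multiplicity) the pairs $S \subseteq T$ with $\#S - \rk S = i$, $\rk M - \rk T = j$, and $\#T - \#S = \ell$. So the plan is: (1) reduce to showing that each of these coefficient functions is valuative; (2) recognize that each such function is a sum over pairs of a product of \emph{indicator functions} of the rank and cardinality data of $S$ and $T$; and (3) invoke the known fact that such ``local'' indicator functions assembled from nested-subset data are valuative, which is precisely the content of the machinery in \cite{valuative}.

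**Key steps in order.** First I would fix a matroidal subdivision $\mathcal{D}$ of $P(M)$ and recall that for a subpolytope $P(N) \in \mathcal{D}$, the bases of $N$ are exactly the lattice points of $P(M)$ lying in $P(N)$, and more importantly that the rank function of $N$ agrees with that of $M$ on every subset $S$ for which the corresponding face of $P(M)$ meets the relative interior of $P(N)$ — i.e.\ $N$ and $M$ have ``the same local structure'' at the relevant faces. Second, I would rewrite $s_N(x,y,z)$ as $\sum_{S \subseteq T} x^{\#S - \rk_N S} y^{\rk N - \rk_N T} z^{\#T - \#S}$ and observe that only the exponents $\#S - \rk_N S$ and $\rk_N T$ depend on $N$; the cardinalities do not. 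Third — and this is the crux — I would use the structure theory of valuative invariants: the assignment $N \mapsto (\text{multiset of triples } (\rk_N S, \rk_N T, \#S, \#T) \text{ over nested } S \subseteq T)$, packaged appropriately, is exactly the kind of data shown to be valuative in \cite{valuative}. Concretely, one shows that for each fixed pair $(S,T)$ of nested subsets and each fixed pair of target ranks $(r, r')$, the function $N \mapsto \mathbf{1}[\rk_N S = r]\,\mathbf{1}[\rk_N T = r']$ is valuative, and then $s_M$ is a finite nonnegative-integer combination of these (after also summing over the constant cardinality data). Summing valuative functions gives a valuative function, so $s_M$ is valuative.

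**The main obstacle.** The hard part is Step 3: making precise the claim that the rank-indicator functions are valuative and that they combine correctly. One cannot simply cite Theorem \ref{thm:valschub} as a black box, because that gives a \emph{basis} but not directly the valuativity of a specific concrete invariant. Instead I expect the cleanest route is to express $s_M$ in terms of invariants already known to be valuative in \cite{valuative} — in particular, the ``$\mathcal{G}$-invariant'' of Derksen or the covaluative/valuative group generators indexed by rank sequences, which record for each chain of subsets the sequence of rank jumps. Indeed $\sum_{S \subseteq T} x^{\#S-\rk S} y^{\rk M - \rk T} z^{\#T-\#S}$ can be reorganized as a sum over all maximal chains $\varnothing = A_0 \subsetneq A_1 \subsetneq \cdots \subsetneq A_n = [n]$ of a polynomial in $x,y,z$ depending only on the rank-jump sequence $(\rk A_i - \rk A_{i-1})_i$ of the chain, and the function sending $M$ to this chain-weighted sum is visibly of the form shown valuative in \cite{valuative}. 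I would need to verify this reorganization is an identity (a routine but careful combinatorial bookkeeping: each nested pair $S \subseteq T$ is counted once per maximal chain through it, and one divides out or reweights accordingly), and then the valuativity is immediate from the cited structure theory.

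**Deducing Theorem \ref{thm:expval}.** Finally, once Lemma \ref{lem:tuttelike} is in hand, Theorem \ref{thm:expval} follows by showing $\ec(M)$ is obtained from $s_M$ by a fixed linear operation independent of $M$ — differentiation and evaluation at specific values of $x, y, z$, or extraction of a particular linear combination of coefficients — since such operations preserve valuativity. The Möbius-function reformulation of $\ec$ given earlier in the paper (writing $\ec_{\mathcal{P}([n])}(M) = \sum_{S,T} c(T)(k - \rk S)\mu(T,S)$, with $\mu$ the Möbius function of the Boolean lattice, which is itself a fixed function of $\#T - \#S$) is exactly what identifies $\ec$ as such a specialization of $s_M$.
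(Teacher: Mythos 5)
Your reduction to coefficients is correct: it does suffice to show that, for each nested pair $S\subseteq T$ and each pair of thresholds $r,s$, the function $N\mapsto\mathbf{1}[\rk_N S\ge r]\cdot\mathbf{1}[\rk_N T\ge s]$ satisfies the valuativity identity. But at exactly this point you abandon the direct route (``one cannot simply cite Theorem~\ref{thm:valschub} as a black box'') and pivot to a heavier plan: reorganize $s_M$ as a weighted sum over maximal chains and then invoke the valuativity of Derksen's $\mathcal{G}$-invariant from \cite{valuative}. That plan is plausible --- each nested pair lies in $(\#S)!(\#T-\#S)!(n-\#T)!$ maximal chains, and the exponents $\#S-\rk_N S$, $\rk N-\rk_N T$ are functions of the rank-jump sequence alone --- but you leave the bookkeeping unchecked, and you import a substantially stronger theorem than the lemma actually needs. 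There is also a distracting false aside in your Step 1: it is not true that $\rk_N$ agrees with $\rk_M$ on sets $S$ whose face of $P(M)$ meets the relative interior of $P(N)$; in general $\rk_N(S)\le\rk_M(S)$ with equality failing all over the place, and that discrepancy is precisely what the lemma is about.

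The idea you're missing is a short polyhedral observation that closes the gap directly, with no appeal to the $\mathcal{G}$-invariant. Since $\rk_N(S)=\max_{x\in P(N)}\sum_{i\in S}x_i$, the condition $\rk_N(S)\ge r$ holds if and only if $P(N)$ meets the open half-space $\{l_S>r-\tfrac12\}$, where $l_S(x)=\sum_{i\in S}x_i$. So for fixed $S\subseteq T$ and $r\le\rk_M S$, $s\le\rk_M T$, the alternating sum
\[\sum_{\substack{P(N)\in\mathcal{D}_{in}\\\rk_N S\ge r,\ \rk_N T\ge s}}(-1)^{\dim P(M)-\dim P(N)}\]
is exactly the signed count of internal faces of the induced decomposition of $P(M)\cap\{l_S>r-\tfrac12\}\cap\{l_T>s-\tfrac12\}$, which equals $1$ by the Euler-characteristic computation in Lemma~\ref{lem:euler} (Speyer, \cite[6.5]{troplin}). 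Telescoping over $r$ and $s$ then gives the coefficient identity. This is the paper's argument; it is self-contained modulo that one Euler-characteristic fact, and it is where your proposal, as written, has a genuine hole. Your concluding paragraph deducing Theorem~\ref{thm:expval} by a fixed linear specialization of $s_M$ is in the right spirit, though the paper does it more crisply as $\ec(M)=\frac{\partial}{\partial x}\frac{\partial}{\partial y}s_M(0,0,-1)$ rather than via the M\"obius-function reformulation you suggest.
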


This is a generalization of the \defof{Tutte polynomial}, which is \[t_M(x,y)=s_M(x-1,y-1,0).\] In \cite[6.4]{troplin}, David Speyer shows that the Tutte polynomial is a valuative matroid invariant. The proof we give here of \ref{lem:tuttelike} turns out to be almost identical. We single out the following lemma, which appears as \cite[6.5]{troplin}:

\begin{lem}
\label{lem:euler}
If $P$ is a polytope and $\mathcal{G}$ is the set of internal faces of a decomposition of $P$, then \[\sum_{N\in\mathcal{G}}(-1)^{\dim P-\dim N}=1.\]
\end{lem}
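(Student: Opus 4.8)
The plan is to evaluate both sides as Euler characteristics. Write $d=\dim P$ and regard the decomposition $\mathcal{D}$ as a finite polytopal complex whose nonempty cells (of all dimensions) have union $P$. Partition these cells into the \emph{internal} ones $\mathcal{G}$, whose relative interior lies in the interior of $P$, and the \emph{boundary} ones $\mathcal{B}$, which are contained in $\partial P$; every cell lies in exactly one class because the restriction of $\mathcal{D}$ to a face of $P$ is again a polytopal subdivision of that face. When $d=0$ the statement is trivial, so I would assume $d\ge 1$.

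First I would invoke the classical fact that the Euler characteristic may be computed from any finite CW structure: since $|\mathcal{D}|=P$ is a closed $d$-ball, $\sum_{N\in\mathcal{D}}(-1)^{\dim N}=\chi(P)=1$. Next, gluing together the restrictions of $\mathcal{D}$ to the facets of $P$ shows that the boundary cells $\mathcal{B}$ form a polytopal subdivision of $\partial P$, which is a $(d-1)$-sphere, so $\sum_{N\in\mathcal{B}}(-1)^{\dim N}=\chi(S^{d-1})=1+(-1)^{d-1}$. Subtracting gives $\sum_{N\in\mathcal{G}}(-1)^{\dim N}=1-\bigl(1+(-1)^{d-1}\bigr)=(-1)^{d}$. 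Finally, since $(-1)^{\dim P-\dim N}=(-1)^{d}(-1)^{\dim N}$, multiplying through by $(-1)^d$ yields $\sum_{N\in\mathcal{G}}(-1)^{\dim P-\dim N}=(-1)^{2d}=1$, as desired. (For the trivial subdivision this is consistent: then $\mathcal{G}=\{P\}$ and the sum is $(-1)^{d-d}=1$, while $\mathcal{B}$ runs over all proper faces of $P$.)

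There is no real obstacle here; the step most in need of justification is the purely combinatorial claim underlying the bookkeeping, namely that every cell of $\mathcal{D}$ is either interior or entirely contained in $\partial P$ (no "mixed" cells) and that the boundary cells genuinely tile $\partial P$. Both reduce to the standard fact that restricting a polytopal subdivision to a face of the ambient polytope produces a subdivision of that face. Everything else is the invariance of $\chi$ across CW structures together with the values $\chi(\text{ball})=1$ and $\chi(S^{d-1})=1+(-1)^{d-1}$. This is essentially the argument given for \cite[6.5]{troplin}.
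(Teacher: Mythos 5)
Your proof is correct and is essentially the paper's own argument: the paper computes the sum as $(-1)^{\dim P}(\chi(P)-\chi(\partial P))$ using $\chi(P)=1$ and $\partial P\cong S^{\dim P-1}$, which is exactly your subtraction of the boundary-cell sum from the total-cell sum. You have merely spelled out the bookkeeping (no mixed cells, the boundary cells subdivide $\partial P$) in more detail than the paper does.
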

\begin{proof}
This is just $(-1)^{\dim P}(\chi(P)-\chi(\partial P))$ where $\chi$ is the Euler characteristic. So the result follows, becuase $P$ is contractible and $\partial P$ is homeomorphic to a $(\dim P-1)$-sphere.
\end{proof}

\begin{proof}[Proof of Lemma \ref{lem:tuttelike}]
Plugging the definition of $s_M$ to the definition of valuativity, it's enough to show, for any matroidal subdivision $\mathcal{D}$ of $P(M)$, \[x^{\#S-\rk_M S}y^{\rk M-\rk_M T}z^{\#T-\#S}\] is equal to \[\sum_{P(N)\in\mathcal{D}_{in}}(-1)^{\dim P(M)-\dim P(N)}x^{\#S-\rk_N S}y^{\rk N-\rk_N T}z^{\#T-\#S}.\] Comparing the coefficients of $x^{\#S-\rk_M S}y^{\rk M-\rk_M T}z^{\#T-\#S}$, we want that \[\sum_{\substack{P(N)\in\mathcal{D}_{in}\\\rk_N(S)=r,\ \rk_N(T)=s}}(-1)^{\dim P(M)-\dim P(N)}\] is 1 if $(r,s)=(\rk_M(S),\rk_M(T))$ and 0 otherwise. Since the sum is empty if $r>\rk_M(S)$ or $s>\rk_M(T)$, we'll just show that \[\sum_{\substack{P(N)\in\mathcal{D}_{in}\\\rk_N(S)\ge r,\ \rk_N(T)\ge s}}(-1)^{\dim P(M)-\dim P(N)}=1\] for $r\le \rk_M(S)$ and $s\le \rk_M(T)$.

Let $l_S$ be the linear function on $\R^n$ sending $(x_i)$ to $\sum_{i\in S}x_i$. Note that \[\rk_N(S)=\max_{x\in P(N)}l_S(x).\] So $\rk_N(S)\ge r$ if and only if $P(N)$ intersects the half-space $l_S>r-\frac12$. So our equality follows by applying Lemma \ref{lem:euler} to $P(M)\cap\{l_M>r-\frac12\}\cap\{l_N>s-\frac12\}$.
\end{proof}

\begin{proof}[Proof of Theorem \ref{thm:expval}]
This follows directly: \[\ec(M)=\frac\partial{\partial x}\frac\partial{\partial y}s_M(0,0,-1).\]
\end{proof}

\bibliography{ecodimbib}

\begin{thebibliography}{{Whi}86}

\bibitem[ARW13]{noncross}
Federico Ardila, Felipe Rinc{\'o}n, and Lauren Williams.
\newblock Positroids and non-crossing partitions.
\newblock {\em arXiv preprint arXiv:1308.2698}, 2013.

\bibitem[DF10]{valuative}
Harm Derksen and Alex Fink.
\newblock Valuative invariants for polymatroids.
\newblock {\em Advances in Mathematics}, 225(4):1840--1892, 2010.

\bibitem[FNR12]{fnrmat}
L{\'a}szl{\'o}~M. Feh{\'e}r, Andr{\'a}s N{\'e}methi, and Rich{\'a}rd
  Rim{\'a}nyi.
\newblock Equivariant classes of matrix matroid varieties.
\newblock {\em Comment. Math. Helv.}, 87(4):861--889, 2012.

\bibitem[FS05]{matpoly}
Eva~Maria Feichtner and Bernd Sturmfels.
\newblock Matroid polytopes, nested sets and {B}ergman fans.
\newblock {\em Port. Math. (N.S.)}, 62(4):437--468, 2005.

\bibitem[Ful92]{fultonessential}
William Fulton.
\newblock Flags, {S}chubert polynomials, degeneracy loci, and determinantal
  formulas.
\newblock {\em Duke Math. J.}, 65(3):381--420, 1992.

\bibitem[KLS10]{projrich}
Allen {Knutson}, Thomas {Lam}, and David~E {Speyer}.
\newblock {Projections of Richardson Varieties}.
\newblock {\em ArXiv e-prints}, August 2010.

\bibitem[KLS11]{klspos}
Allen {Knutson}, Thomas {Lam}, and David~E {Speyer}.
\newblock {Positroid Varieties: Juggling and Geometry}.
\newblock {\em arXiv: 1111.3660v1}, November 2011.

\bibitem[Oh11]{posschub}
Suho Oh.
\newblock Positroids and {S}chubert matroids.
\newblock {\em J. Combin. Theory Ser. A}, 118(8):2426--2435, 2011.

\bibitem[Pos06]{totalpos}
Alexander Postnikov.
\newblock Total positivity, grassmannians, and networks.
\newblock {\em arXiv preprint math/0609764}, 2006.

\bibitem[Sho91]{stretch}
Peter~W. Shor.
\newblock Stretchability of pseudolines is {NP}-hard.
\newblock In {\em Applied geometry and discrete mathematics}, volume~4 of {\em
  DIMACS Ser. Discrete Math. Theoret. Comput. Sci.}, pages 531--554. Amer.
  Math. Soc., Providence, RI, 1991.

\bibitem[Spe08]{troplin}
David~E Speyer.
\newblock Tropical linear spaces.
\newblock {\em SIAM Journal on Discrete Mathematics}, 22(4):1527--1558, 2008.

\bibitem[Vak06]{vakilmurphy}
Ravi Vakil.
\newblock Murphy's law in algebraic geometry: badly-behaved deformation spaces.
\newblock {\em Invent. Math.}, 164(3):569--590, 2006.

\bibitem[{Whi}86]{whitemat}
Neil {White}, editor.
\newblock {\em Theory of Matroids}, volume~26 of {\em Encyclopedia of
  Mathematics and its Applications}.
\newblock Cambridge University Press, 1986.

\end{thebibliography}
\bibliographystyle{alpha}

\end{document}